\definecolor{myurlcolor}{rgb}{0,0,0.5}
\newcommand{\from}{\colon} 
\newcommand{\cat}[1]{\mathcal{#1}} 
\newcommand{\scat}[1]{\mathbbm{#1}} 
\newcommand{\fcat}[1]{\mathbf{#1}} 
\newcommand{\demph}[1]{\textbf{#1}} 
\renewcommand{\epsilon}{\varepsilon} 
\newcommand{\such}{\mathrel{|}} 
\newcommand{\iso}{\cong} 
\newcommand{\bref}[1]{(\ref{#1})} 
\newcommand{\of}{\mathbin{\circ}} 
\DeclareMathOperator{\ev}{ev} 
\newcommand{\id}{\mathrm{id}}
\newcommand*\diff{\mathop{}\!\mathrm{d}}
\newcommand{\mnd}[1]{\mathbb{#1}}
\newcommand{\Set}{\fcat{Set}}
\newcommand{\Meas}{\fcat{Meas}}
\newcommand{\Conv}{\fcat{Conv}}
\newcommand{\psub}[1]{\mathbin{+_{#1}}}
\newcommand{\psubp}[1]{\mathbin{+'_{#1}}}
\newcommand{\const}[1]{\bar{#1}}
\newcommand{\reals}{\mathbb{R}}
\newcommand{\nat}{\mathbb{N}}
\newcommand{\aff}{\operatorname{aff}}
\newcommand{\intf}{\int_{\Omega}f \diff(-)}
\newtheorem{lem}[subsection]{Lemma}
\newtheorem{prop}[subsection]{Proposition}
\newtheorem{thm}[subsection]{Theorem}
\newtheorem{cor}[subsection]{Corollary}
\theoremstyle{definition}
\newtheorem{defn}[subsection]{Definition}
\title{Codensity and the Giry monad}
\author{Tom Avery\thanks{School of Mathematics, University of
 Edinburgh, Edinburgh EH9 3FD, United Kingdom.
Tom.Avery@ed.ac.uk}}
\date{}
\begin{document}
\sloppy

\maketitle

\begin{abstract}
The Giry monad on the category of measurable spaces sends
a space to a space of all probability measures on it. There is also a finitely additive
Giry monad in which probability measures are replaced by finitely additive probability
measures. We give a characterisation of both finitely and countably additive probability measures in
terms of integration operators giving a new description of the Giry monads. This is then used to show that the Giry
monads arise as the codensity monads of forgetful functors from certain categories
of convex sets and affine maps to the category of measurable spaces.
\end{abstract}

\section{Introduction}
\label{sec:intro}

In general there are many different probability measures on a given
measurable space, and the set of all of them can be made into a
measurable space in a canonical way. Thus we have a process which turns a space into a
new space whose points are the probability measures on the old one; this process
is described in categorical language by a monad.

On the other hand, there is a standard
categorical machine which turns a functor into a monad, namely the codensity monad
of the functor. We show that the monad described above is the output of this machine
when it is fed a natural forgetful functor involving certain convex sets. In other words, once we accept
the mathematical importance of these convex sets (which may be taken to be all
bounded, convex subsets of $\reals^n$ together with the set of sequences in the unit interval converging
to $0$), then the notion of a probability measure is categorically inevitable.

The monad sending a measurable space to its space of probability measures is called
the Giry monad, first defined in \cite{giry82}. There are many variations of this monad; 
in \cite{giry82} Giry defines both the monad mentioned above and a similar monad
on the category of Polish spaces. In this paper we will be mainly concerned with
Giry's monad on measurable spaces (which we refer to simply as the Giry monad),
and a modification in which probability measures are replaced by finitely additive
probability measures (the finitely additive Giry monad).

Note that there is a similar monad on $\Set$ that has sometimes been called the finitary
Giry monad \cite{fritz09} or the distribution monad \cite{jacobs11}.
It sends a set to the set of formal convex combinations of its elements, which can
be thought of as finitely supported probability measures. The algebras
for this monad are abstract ``convex spaces'', which have been independently 
discovered and investigated several times, for example in \cite{stone49}, \cite{gudder79}
and \cite{fritz09}. This \emph{finitary} Giry monad is not to be confused with the 
\emph{finitely additive} Giry monad, although they behave similarly on finite sets
(regarded as discrete measurable spaces).

The Kleisli category of the Giry monad has probability-theoretic
 significance \cite{panangaden99}; it is the category of measurable spaces and
``Markov kernels''. As a simple example, a finite set (with discrete $\sigma$-algebra)
equipped with an endomorphism in the Kleisli category of the Giry monad is precisely
a discrete time Markov chain. In \cite{doberkat06}, Doberkat shows that the 
Eilenberg--Moore category of (the Polish space version of) the Giry monad is the category of
continuous convex structures on Polish spaces with continuous affine maps.

The monads described above are examples of a loose family that we may think of
as ``measure monads''; in each instance, the monad sends a ``space'' to a space
of ``measures'' on it, where we must interpret space and measure appropriately.
Other examples include the ultrafilter monad (to which we shall return shortly), the
probabilistic powerdomain \cite{jonesPlotkin89}, the distribution monad \cite{jacobs11,
fritz09} and the monad defined by Lucyshyn-Wright in \cite{lucyshyn-wright12}. The idea of interpreting
monads measure-theoretically has been extensively pursued by Kock in \cite{kock12} 
and by Lucyshyn-Wright in \cite{lucyshyn-wright13}.

A common theme for all these monads is ``double dualisation''. For any notion of a
measure on a space $X$, there is a corresponding notion of integration.
Integration takes functions $f \colon X \to R$ from the space
to a set $R$ of scalars (usually the reals, positive reals or the unit interval), and returns
scalars in $R$. Such an integration operation can be thought of as an element of
\begin{equation*}
\label{eqn:double-dual}
\mathrm{Hom}(\mathrm{Hom}(X,R),R)
\end{equation*}
where the inner and outer $\mathrm{Hom}$'s must be interpreted appropriately in
different contexts. Thus notions of measure are closely related to double dualisation.

In some circumstances the measures can be completely characterised by their
integration operators. Perhaps the most well known instance of this phenomenon is the
Riesz--Markov--Kakutani representation theorem \cite{kakutani41}, which says that the
space of finite, signed, regular Borel measures on a compact Hausdorff space $X$ is isomorphic to
\[
\mathbf{NVS}(\mathbf{Top}(X,\mathbb{R}),\mathbb{R}),
\]
(as a normed vector space) where $\mathbf{Top}$ is the category of topological spaces and continuous maps, and
$\mathbf{NVS}$ is the category of normed vector spaces and bounded linear maps.
In Section~\ref{sec:int}, we give a similar (but easier) characterisation of probability measures
in terms of their integration operators, which is a correction of a claim of Sturtz \cite{sturtz14},
with many parts of the proof appearing there. Sturtz has since issued a corrected version
of his paper \cite{sturtz15}.

Such characterisations might make us hope that there is some general categorical machinery
for double dualisation that, when fed an appropriate and relatively simple input, naturally
gives rise to measure monads, and the Giry monad in particular. Codensity monads provide
such a categorical machine.

Codensity monads were first defined by Kock in \cite{kock66}, and the dual notion was studied independently
by Appelgate and Tierney in \cite{appelgateTierney69} under the name ``model-induced cotriple''.
Given a functor $U\colon \scat{C}\to \cat{M}$, the codensity monad of $U$ (when it exists)
is the right Kan extension $T^U$ of $U$ along itself. The universal property of Kan extensions
equips $T^U$ with a canonical monad structure. In \cite{leinster13}, Leinster describes how the
codensity monad can be thought of as a substitute for the monad induced by the
adjunction between $U$ and its left adjoint, even when the left adjoint does not
exist. In particular, when the left adjoint \emph{does} exist, the codensity monad
is the usual monad induced by the adjunction.

Codensity monads can be seen as a form of double dualisation via the end formula,
\[
T^U m = \int_{c \in \scat{C}}[\cat{M}(m,Uc),Uc].
\]
At first glance ``elements'' of this object would appear to be families of integration operators,
with the codomain of integration ranging over the objects of $\scat{C}$. However, in examples
of interest, such a family is determined by its component at a single object $i$, say, of $\scat{C}$.
The other objects serve to impose naturality conditions which force the $i$ component to preserve
certain algebraic structure which is encoded in the category $\scat{C}$. This idea will become
clear in the proof of Theorem~\ref{thm:giry-cod}.

As observed, for example, in \cite{leinster13}, the ultrafilter monad can be viewed as a measure monad
in the following way. An ultrafilter on a set $X$ consists of a set of subsets of $X$; thus
it can be thought of as a map from the power set of $X$ to $\{0,1\}$. Viewing $\{0,1\}$
as a subset of the unit interval $I$, it turns out that the functions
\[
2^X \to I
\]
corresponding to ultrafilters are precisely the finitely additive probability measures taking values in
$\{0,1\}$. This means that the ultrafilter monad is a primitive version of the finitely
additive Giry monad.

The ultrafilter monad is the codensity monad of the inclusion of the category of finite
sets into the category of sets; this was first proved by Kennison and Gildenhuys
 in \cite{kennisonGildenhuys71} and brought to wider attention by Leinster
in \cite{leinster13}. The main theorem of this paper (Theorem~\ref{thm:giry-cod}) is an analogous result for the Giry monads, with finite sets
replaced by certain convex sets, and with sets replaced by measurable spaces.

Despite the influence of Sturtz's work on the characterisation of probability meaures in
terms of integration operators mentioned above, our main result (Theorem~\ref{thm:giry-cod})
is substantially different from that of \cite{sturtz15}. Both seek to exhibit 
the Giry monad as the codensity monad of a particular functor, however the 
functors used differ in two significant respects: firstly, Sturtz uses the entire
category of convex spaces (as defined in \cite{fritz09}) as the domain of the functor,
whereas we will use a small subcategory of this; and secondly, Sturtz incorporates
an element of double dualisation into the functor itself, even before taking the codensity
monad, whereas we will use a more ``direct'' forgetful functor.



I am grateful to Tom Leinster for suggesting this topic to work on, for a lot of helpful
advice, and for many enlightening discussions.

\paragraph*{Conventions:}
We write $\Meas$ for the category of measurable spaces (i.e.\ sets equipped with a
$\sigma$-algebra of subsets) and measurable maps. We will often refer to
measurable spaces by their underlying sets, leaving the $\sigma$-algebra implicit.

We write $I$ for the unit interval $[0,1]$. When viewed as a measurable space,
we always equip it with the Borel $\sigma$-algebra. Given sets $B \subseteq A$,
we write $\chi_B \from A \to I$ for the characteristic function of $B$.
If $r \in I$, then $\bar{r} \from A \to I$ denotes the constant function with
value $r$.

If $A$ is a set and $m$ is an object of a category $\cat{M}$, then $[A,m]$ denotes
the $A$ power of $m$, that is, the product in $\cat{M}$ of $A$ copies of $m$. In
particular if $\cat{M} = \Set$, then $[A,m] = \Set(A,m)$, the set of functions from
 $A$ to $m$.

The integral sign $\int$ has two meanings in this paper: the occurences in Section~\ref{sec:cod},
and the single occurence in the introduction, represent the category theoretic notion
of an end (see X.5 in \cite{maclane71}). All other instances represent integration
with respect to a (possibly only finitely additive) probability measure.

\section{The Giry monads}
\label{sec:giry}

In this section we review some basic definitions relating to finitely additive probability
measures. We then define the finitely additive Giry monad, and the Giry monad as
a submonad.

Recall the following definitions.

\begin{defn}
Let $(\Omega,\Sigma)$ be a measurable space and $\pi \from \Sigma \to I$ (where $I$ is
the unit interval). Suppose
\begin{itemize}
\item $\pi (\Omega) = 1$, and
\item whenever $A,B \in \Sigma$ are disjoint, we have $\pi(A\cup B) = \pi(A) + \pi(B)$.
\end{itemize}
Then $\pi$ is called a \demph{finitely additive probability measure} on $(\Omega,\Sigma)$. 
Suppose additionally that,
\begin{itemize}
\item whenever $A_i \in \Sigma$ are pairwise disjoint for $i \in \nat$, we have
\[
\pi \left(\bigcup_{i=0}^{\infty}A_i \right) = \sum_{i = 0}^{\infty} \pi(A_i).
\]
\end{itemize}
Then $\pi$ is called a \demph{probability measure} on $(\Omega,\Sigma)$.
\end{defn}

The general theory of integration of finitely additive measures, as developed in
\cite{raoRao83}, is quite complex and subtle. There are several definitions of
integration; we will be concerned with the $D$-integral. However, we will only be interested in
integrating \emph{measurable, bounded} functions against finitely additive
\emph{probability} measures, which makes it possible to simplify the definition
considerably. Therefore we will for convenience briefly spell out how the integral is defined in
this special case.

Let $\pi$ be a finitely additive probability measure on $\Omega$. Recall that a function
$f \from \Omega \to \reals$ is \demph{simple} if it is a linear combination of
characteristic functions of measurable sets. The integral of a simple function is
defined by
\[
\int_{\Omega} \left(\sum_{i=1}^n a_i\chi_{A_i}\right) \diff \pi = \sum_{i=1}^n a_i \pi(A_i),
\]
and this does not depend on the choice of representation of the function.
For an arbitrary measurable, bounded, non-negative function $f \from \Omega \to \reals$, the integral is defined by
\[
\int_{\Omega} f \diff \pi = \sup \left\{ \int_{\Omega} f' \diff \pi \such f' \text{ is simple and } f' \leq f\right\} \in \reals,
\]
and this extends to functions that may take negative values in a standard way.
Note that the fact that $f$ is bounded guarantees that the supremum is finite.
The following lemma is easily verified.
\begin{lem}
\label{lem:int-approx}
Let $f \from \Omega \to \reals$ be measurable, bounded and non-negative, and $\pi$ a finitely additive
probability measure on $\Omega$. Then there is a sequence $f_n$
of simple functions converging uniformly to $f$, and for any such sequence
\[
\int_{\Omega} f_n \diff \pi \to \int_{\Omega} f \diff \pi.
\]
Moreover, $(f_n)_{n=1}^{\infty}$ can be taken to be a pointwise increasing (or decreasing) sequence. \qed
\end{lem}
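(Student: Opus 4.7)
The plan is to handle the three assertions in turn: existence of a uniformly convergent sequence of simple functions, convergence of the corresponding integrals, and the monotonicity clause.

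For existence together with monotonicity, I would invoke the standard dyadic staircase. Since $f$ is bounded, choose $M$ with $0 \le f \le M$, partition $[0, M]$ into intervals of length $2^{-n}$, and let $f_n$ take the value $k 2^{-n}$ on $f^{-1}\bigl([k 2^{-n}, (k+1) 2^{-n})\bigr)$. Each $f_n$ is a finite linear combination of characteristic functions of measurable sets, hence simple, and $0 \le f - f_n \le 2^{-n}$ uniformly. Because each partition refines its predecessor, the sequence is pointwise increasing; the analogous construction using ceilings gives a decreasing variant.

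For the integral statement, I would first record two elementary properties of the integral of simple functions: (i) $\int_{\Omega} (g + \bar{c}) \diff \pi = \int_{\Omega} g \diff \pi + c$ for $c \in \reals$, using $\pi(\Omega) = 1$; and (ii) monotonicity $g \le h \Rightarrow \int_{\Omega} g \diff \pi \le \int_{\Omega} h \diff \pi$ for simple $g, h$. Both are immediate from the explicit sum formula after passing to a common refinement of the partitions underlying the two chosen representations (and well-definedness of the integral on simple functions comes from the same observation). Now given any uniformly convergent $f_n \to f$, set $\epsilon_n = \sup_x |f_n(x) - f(x)| \to 0$, so $f_n - \epsilon_n \le f \le f_n + \epsilon_n$ pointwise. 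Combining (i), (ii), and the definition of $\int_{\Omega} f \diff \pi$ as a supremum over simple $f' \le f$ yields the sandwich
\[
\int_{\Omega} f_n \diff \pi - \epsilon_n \le \int_{\Omega} f \diff \pi \le \int_{\Omega} f_n \diff \pi + \epsilon_n,
\]
whence $\int_{\Omega} f_n \diff \pi \to \int_{\Omega} f \diff \pi$.

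I do not foresee any real obstacle; this lemma is essentially a packaging step. The only point needing careful attention is that the integral is genuinely well-defined on simple functions and satisfies (i) and (ii), but this too reduces to finite additivity of $\pi$ on the disjoint pieces of a common refinement.
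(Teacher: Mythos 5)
Your proof is correct and complete; the paper itself offers no argument here (the lemma is stated as ``easily verified''), and what you have written is precisely the standard verification the author is implicitly relying on: the dyadic staircase for existence and monotonicity, and the sandwich $f_n - \bar{\epsilon}_n \le f \le f_n + \bar{\epsilon}_n$ combined with monotonicity and translation-invariance of the integral on simple functions for the convergence claim. No gaps.
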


Many basic results on integration against probability measures hold true for finitely additive
probability measures. In particular, integration is linear and order-preserving (4.4.13 (ii) and (vi)
 in \cite{raoRao83}), and the change of variables formula (Lemma~\ref{lem:change-var} below)
is valid. An important exception is that the monotone convergence theorem (and therefore
the dominated convergence theorem) does not hold for finitely additive measures. In fact,
the monotone convergence theorem holds if and only if the measure is countably additive.

We now move on to the definitions of the Giry monads.

\begin{defn}
Let $\Omega$ be a measurable space. Then $F\Omega$ is defined to be the 
set of finitely additive probability measures on $\Omega$, equipped
with the smallest $\sigma$-algebra such that
\begin{align*}
\ev_A \from & F\Omega \to  I \\
	  	& \pi  \mapsto  \pi (A)
\end{align*}
is measurable for each measurable $A \subseteq \Omega$. We write $G\Omega 
\subseteq F\Omega$ for the set of (countably additive) probability measures,
and equip it with the subspace $\sigma$-algebra.

Let $g \from \Omega \to \Omega'$ be measurable. Then
$Fg \from F\Omega \to F\Omega'$ is defined by
\[
Fg(\pi)(A') = \pi(g^{-1} (A'))
\]
for each measurable $A' \subseteq \Omega'$ and $\pi \in F\Omega$. We define $Gg \from G\Omega \to G\Omega'$ by restricting
$Fg$ to $G\Omega$.
\end{defn}

We call $Fg(\pi)$ the \demph{push-forward} of $\pi$ along $g$, written
as $g_*(\pi)$ by some authors. Integration for push-forward measures is described
by the \demph{change of variables formula}:
\begin{lem}
\label{lem:change-var}
Let $g \from \Omega \to \Omega'$ be measurable, $\pi \in F\Omega$ and
$f \from \Omega' \to \reals$ be measurable and bounded. Then
\[
\int_{\Omega}f \of g \diff \pi = \int_{\Omega'} f \diff Fg(\pi).
\]
\end{lem}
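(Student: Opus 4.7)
The plan is to prove the formula first for characteristic functions, then extend by linearity to simple functions, and finally pass to arbitrary bounded measurable functions using the uniform approximation provided by Lemma~\ref{lem:int-approx}.

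First I would take $f = \chi_{A'}$ for a measurable subset $A' \subseteq \Omega'$. Then $f \circ g = \chi_{g^{-1}(A')}$, and both sides unwind directly from the definition of the integral of a simple function: the left-hand side is $\pi(g^{-1}(A'))$, and by the definition of $Fg(\pi)$ this equals $Fg(\pi)(A')$, which is the right-hand side. Next, because both sides of the equation are linear in $f$ (integration against a finitely additive probability measure is linear by 4.4.13(ii) of \cite{raoRao83}, as noted in the text), the identity immediately extends to all simple functions $f' \from \Omega' \to \reals$; note that $f' \circ g$ is then a simple function on $\Omega$, since measurability of $g$ ensures $g^{-1}$ sends measurable sets to measurable sets.

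For a general bounded measurable $f \from \Omega' \to \reals$, I would split $f$ into positive and negative parts and reduce to the non-negative case. Apply Lemma~\ref{lem:int-approx} to obtain a sequence $(f_n)$ of simple functions on $\Omega'$ converging uniformly to $f$, so that $\int_{\Omega'} f_n \diff Fg(\pi) \to \int_{\Omega'} f \diff Fg(\pi)$. Then $f_n \circ g$ is a sequence of simple functions on $\Omega$, and uniform convergence is preserved by pre-composition, so $f_n \circ g \to f \circ g$ uniformly. Boundedness of $f$ together with uniform convergence guarantees boundedness of $f \circ g$, so the second part of Lemma~\ref{lem:int-approx} applies and gives $\int_{\Omega} f_n \circ g \diff \pi \to \int_{\Omega} f \circ g \diff \pi$. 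Combined with the simple-function case already established, both sides have the same limit.

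The only mildly delicate point is making sure the uniform-approximation mechanism behaves well under pullback along $g$: measurability of $g$ ensures that $f_n \circ g$ remains simple, and uniform convergence is trivially preserved by pre-composition, so no new issues arise. Since monotone or dominated convergence fails for finitely additive measures, it is essential that the approximation from Lemma~\ref{lem:int-approx} is uniform rather than merely pointwise; this is the one spot where care is needed, and it is exactly what prevents a more direct argument.
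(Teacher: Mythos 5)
Your proof is correct. The paper itself supplies no argument for this lemma: it cites Chapter~VIII, Theorem~C of Halmos for the countably additive case and asserts that the proof carries over verbatim to finitely additive probability measures. What you have written is essentially that standard proof made explicit and correctly adapted to the finitely additive setting: the base case for characteristic functions is immediate from the definition of $Fg(\pi)$, linearity extends it to simple functions, and --- crucially --- you replace the monotone/dominated convergence step of the usual countably additive argument (which is unavailable here) with the uniform approximation of Lemma~\ref{lem:int-approx}, applied on both sides of the identity. This is exactly the substitution the paper makes elsewhere (e.g.\ in the remark that Giry's use of the monotone convergence theorem can be replaced by an instance of Lemma~\ref{lem:int-approx}), so your argument is in the spirit of the paper and fills in a detail it leaves to the reader. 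The supporting observations you flag --- that $g^{-1}$ preserves measurability so $f'\of g$ stays simple, that uniform convergence survives pre-composition with $g$, and that the reduction to non-negative $f$ is needed before invoking Lemma~\ref{lem:int-approx} --- are the right ones and are all correct.
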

\begin{proof}
This is a familiar result for countably additive measures; see for example
Chapter~VIII Theorem~C in \cite{halmos50}. The proof for finitely additive
probability measures is identical.
\end{proof}

It is straightforward to check that the above definitions define functors
 $F,G\from \Meas \to \Meas$. The following lemma will be used to show that
the multiplication of each Giry monad is measurable, and also in Proposition~\ref{prop:bij-meas} below.

\begin{lem}
\label{lem:int-meas}
Let $f \from \Omega \to I$ be measurable. Then the map $\intf \from F\Omega \to I$ defined by
\[
\pi \mapsto \int_{\Omega}f \diff \pi
\]
is measurable.
\end{lem}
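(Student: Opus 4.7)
The plan is to prove measurability first for the special case of characteristic functions, extend by linearity to simple functions, and then pass to the general case by uniform approximation. The $\sigma$-algebra on $F\Omega$ is generated by the evaluations $\ev_A$, so the smallest class of functions $\intf$ that is closed under pointwise limits and contains the integration maps of characteristic functions will automatically include every measurable $f \from \Omega \to I$, provided one can carry the induction through simple functions.

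For the base case, if $f = \chi_A$ for some measurable $A \subseteq \Omega$, then $\int_\Omega \chi_A \diff \pi = \pi(A) = \ev_A(\pi)$, which is measurable by the definition of the $\sigma$-algebra on $F\Omega$. For a simple function $f = \sum_{i=1}^n a_i \chi_{A_i}$ with $a_i \in \reals$ and $A_i$ measurable, linearity of the integral (valid for finitely additive probability measures by 4.4.13 of \cite{raoRao83}) gives $\int_\Omega f \diff \pi = \sum_{i=1}^n a_i \ev_{A_i}(\pi)$, which is a finite $\reals$-linear combination of measurable real-valued functions on $F\Omega$ and hence measurable; composing with the inclusion of the image in $I$ poses no issue since the codomain is $I$ whenever $f$ itself takes values in $I$.

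For the general case, let $f \from \Omega \to I$ be measurable. By Lemma~\ref{lem:int-approx} there is a sequence $(f_n)$ of simple functions converging uniformly to $f$; by replacing $f_n$ with $\min(\max(f_n,0),1)$ if necessary we may assume each $f_n$ takes values in $I$ and the uniform convergence is preserved. Uniform convergence and $\pi(\Omega) = 1$ give the estimate $\bigl|\int_\Omega f_n \diff \pi - \int_\Omega f \diff \pi\bigr| \leq \sup_{\omega \in \Omega} |f_n(\omega) - f(\omega)|$, so $\int_\Omega f_n \diff \pi \to \int_\Omega f \diff \pi$ pointwise in $\pi \in F\Omega$ (in fact uniformly). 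By the previous paragraph each $\pi \mapsto \int_\Omega f_n \diff \pi$ is measurable, and pointwise limits of measurable real-valued functions are measurable, so $\intf$ is measurable as claimed.

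The only delicate point is ensuring the approximation step stays inside the class of functions valued in $I$ so that the target $\sigma$-algebra matches, which is handled by truncation; the rest is a textbook induction through the standard hierarchy of measurable functions, tailored to the generating family $\{\ev_A\}$ on $F\Omega$.
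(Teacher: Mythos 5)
Your proof is correct and follows essentially the same route as the paper: reduce to $\ev_A$ for characteristic functions, extend by linearity to simple functions, and pass to general $f$ via the uniform approximation of Lemma~\ref{lem:int-approx}. The only (cosmetic) difference is in the last step, where the paper checks measurability of the preimage of $[0,r]$ directly as a countable intersection using an increasing approximating sequence, whereas you invoke the standard fact that a pointwise limit of measurable real-valued functions is measurable; both are valid.
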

\begin{proof}
The inverse image of $[0,r]\subseteq I$ under this map is
\[
\left\{ \pi \such \int_{\Omega}f  \diff \pi \leq r \right\};
\]
we must show that this is measurable. Let $f_n$ be as in Lemma~\ref{lem:int-approx} and increasing.
Then the set above can be written as
\[
\bigcap_{n=1}^{\infty}\left\{ \pi \such \int_{F\Omega}f_n \diff \pi \leq r \right\}.
\]
In the case that $f = \chi_A$  for $A \subseteq \Omega$ measurable, $\intf = \ev_A$ so is
measurable by definition. Since integration is linear, and linear combinations of measurable functions
are measurable, $\intf$ is also measurable when $f$ is a simple function. Hence, returning to the
case of an arbitrary measurable $f$, each of the sets appearing in the above intersection
is measurable, and a countable intersection of measurable sets is measurable.
\end{proof}

We now describe the monad structure on $F$ and $G$.

\begin{defn}
Let $\Omega$ be a measurable space. The natural transformations
\[
\eta^{\mnd{F}} \from \id_{\Meas} \to F \quad \text{ and } \quad
\mu^{\mnd{F}} \from FF \to F
\]
are defined as follows. Let
\[
\eta^{\mnd{F}}_{\Omega}(\omega) (A) = \chi_A (\omega)
\]
where $\omega \in \Omega$ and $A \subseteq \Omega$ is measurable, so $\eta^{\mnd{F}}_{\Omega}(\omega)$ is the \emph{Dirac} or \emph{point
 measure} at $\omega$. Let
\[
\mu^{\mnd{F}}_{\Omega} (\rho)(A) = \int_{F\Omega} \ev_A \diff \rho.
\]
Here $\rho \in FF\Omega$ is a finitely additive probability measure on $F\Omega$, and
$A \subseteq \Omega$ is measurable, so in particular the map $\ev_A \from F\Omega \to I$
is measurable by definition. Thus integrating it against $\rho$ gives an element of $I$.
The natural transformations
\[
\eta^{\mnd{G}} \from \id_{\Meas} \to G \quad \text{ and } \quad
\mu^{\mnd{G}} \from GG \to G
\]
are defined similarly. It is easy to check that these formulae do define finitely (resp.\ countably)
additive probability measures on $\Omega$.
\end{defn}

Let us prove that $\mu_{\Omega}^{\mnd{F}}$ is measurable. If we take $f = \ev_A$ in
Lemma~\ref{lem:int-meas}, then $\intf$ is the composite
\[
\ev_A \of \mu_{\Omega}^{\mnd{F}} \from FF\Omega \to F\Omega \to I,
\]
hence this composite is measurable. Measurability of $\mu_{\Omega}^{\mnd{F}}$ follows
since the maps $\ev_A$ generate the $\sigma$-algebra on $F\Omega$. The proof
for $\mu_{\Omega}^{\mnd{G}}$ is similar and measurability of the units is obvious.

\begin{prop}
The above definitions give monads $\mnd{F} = (F,\eta^{\mnd{F}},
\mu^{\mnd{F}})$ and $\mnd{G} = (G,\eta^{\mnd{G}},\mu^{\mnd{G}})$
on $\Meas$.
\end{prop}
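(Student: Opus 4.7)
The proof amounts to verifying naturality of $\eta^{\mnd{F}}, \eta^{\mnd{G}}, \mu^{\mnd{F}}, \mu^{\mnd{G}}$ and the unit and associativity axioms. A measure in $F\Omega$ is determined by its values on characteristic functions, or equivalently by the integrals of measurable functions into $I$, so throughout I would compare measures by evaluating at a measurable set $A$ and comparing integrals, using the change of variables formula (Lemma~\ref{lem:change-var}) to move integrals across pushforwards.

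First I would handle the easy pieces. Naturality of $\eta^{\mnd{F}}$ reduces to $\chi_A(g(\omega)) = \chi_{g^{-1}A}(\omega)$. For naturality of $\mu^{\mnd{F}}$ along $g\from\Omega\to\Omega'$, I unwind both sides on $\rho\in FF\Omega$ at $A'\subseteq\Omega'$ measurable: one side is $\int_{F\Omega}\ev_{g^{-1}A'}\diff\rho$, and the other, after unwinding, becomes $\int_{FF\Omega}(\ev_{A'}\of Fg)\diff\rho$; these coincide because $\ev_{g^{-1}A'} = \ev_{A'}\of Fg$ as maps $F\Omega\to I$. The unit laws are straightforward: $\mu^{\mnd{F}}_{\Omega}(\eta^{\mnd{F}}_{F\Omega}(\pi))(A) = \int_{F\Omega}\ev_A\diff\delta_\pi = \ev_A(\pi) = \pi(A)$, and $\mu^{\mnd{F}}_{\Omega}(F\eta^{\mnd{F}}_{\Omega}(\pi))(A) = \int_{\Omega}\ev_A\of\eta^{\mnd{F}}_\Omega\diff\pi = \int_{\Omega}\chi_A\diff\pi = \pi(A)$ by change of variables.

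The main obstacle is associativity, which requires a mini-Fubini identity: for every measurable $f\from F\Omega\to I$ and every $\xi\in FFF\Omega$,
\[
\int_{F\Omega} f \diff \mu^{\mnd{F}}_{F\Omega}(\xi) \;=\; \int_{FF\Omega} \!\!\left(\textstyle\int_{F\Omega}\! f \diff(-)\right)\! \diff \xi.
\]
The definition of $\mu^{\mnd{F}}$ gives this for $f = \chi_B$ (equivalently $f=\ev_B$) where $B\subseteq F\Omega$ is measurable, linearity of integration extends it to simple $f$, and Lemma~\ref{lem:int-approx} together with the fact (from Lemma~\ref{lem:int-meas} and its proof) that $\int_{F\Omega}f\diff(-)$ is measurable extends it to arbitrary measurable bounded $f\geq 0$ via uniform approximation — this is the workaround for the absence of monotone convergence in the finitely additive setting. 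Once this identity is in hand, applying it with $f = \ev_A$ gives $\mu^{\mnd{F}}_\Omega\of\mu^{\mnd{F}}_{F\Omega}(\xi)(A) = \int_{FF\Omega} \mu^{\mnd{F}}_\Omega(-)(A)\diff\xi$, and on the other side change of variables yields $\mu^{\mnd{F}}_\Omega\of F\mu^{\mnd{F}}_\Omega(\xi)(A) = \int_{FF\Omega}\ev_A\of\mu^{\mnd{F}}_\Omega \diff \xi$, and $\ev_A\of\mu^{\mnd{F}}_\Omega$ is exactly $\mu^{\mnd{F}}_\Omega(-)(A)$ by definition, so both sides agree.

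Finally, for the countably additive monad $\mnd{G}$, everything restricts along the inclusion $G\subseteq F$: naturality, unit and associativity are inherited once one checks that the constructions land in $G$. The only new content is showing that $\mu^{\mnd{G}}_\Omega(\rho)$ is countably additive when $\rho\in GG\Omega$. Given pairwise disjoint measurable $A_i\subseteq\Omega$ with union $A$, the functions $\sum_{i=0}^n \ev_{A_i}$ form a pointwise increasing sequence on $G\Omega$ bounded by $\ev_A\leq 1$, and they converge pointwise to $\ev_A$ because each individual $\pi\in G\Omega$ is countably additive; the monotone convergence theorem applied to the countably additive measure $\rho$ then gives $\sum_i\mu^{\mnd{G}}_\Omega(\rho)(A_i) = \mu^{\mnd{G}}_\Omega(\rho)(A)$, completing the proof.
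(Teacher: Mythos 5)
Your proof is correct, but it is far more self-contained than the paper's: the paper simply cites Giry's original article for $\mnd{G}$, states that the proof for $\mnd{F}$ is similar, and remarks that Giry's one appeal to the monotone convergence theorem can be replaced by an instance of Lemma~\ref{lem:int-approx}. You have effectively reconstructed that omitted argument, and you put your finger on exactly the right pressure point: the associativity axiom reduces to the ``mini-Fubini'' identity $\int_{F\Omega} f \diff \mu^{\mnd{F}}_{F\Omega}(\xi) = \int_{FF\Omega} \bigl(\int_{F\Omega} f \diff(-)\bigr) \diff \xi$, which holds for $f = \chi_B$ by the very definition of $\mu^{\mnd{F}}$, extends to simple $f$ by linearity, and then to general measurable $f$ by uniform approximation (Lemma~\ref{lem:int-approx}) together with the measurability of $\intf$ from Lemma~\ref{lem:int-meas} --- this is precisely the substitute for monotone convergence that the paper alludes to, and your closing use of the genuine monotone convergence theorem to get countable additivity of $\mu^{\mnd{G}}_{\Omega}(\rho)$ is also the standard step in Giry's argument. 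Two cosmetic slips: in the naturality computation the integral against $\rho \in FF\Omega$ should be written over $F\Omega$, not $FF\Omega$; and ``$f = \chi_B$ (equivalently $f = \ev_B$)'' conflates the characteristic function $\chi_B \from F\Omega \to I$ with the evaluation map $\ev_B \from FF\Omega \to I$ --- what you mean is that $\rho \mapsto \int_{F\Omega} \chi_B \diff \rho$ equals $\ev_B$ as a function on $FF\Omega$. Neither affects the substance.
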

\begin{proof}
See \cite{giry82} for $\mnd{G}$. The proof for $\mnd{F}$ is similar. Note that Giry
invokes the monotone convergence theorem in the proof, however it can be replaced
by an instance of Lemma~\ref{lem:int-approx}.
\end{proof}

We call $\mnd{F}$ the \demph{finitely additive Giry monad} and $\mnd{G}$
the \demph{Giry monad}.

\section{Integration operators}
\label{sec:int}
We now turn to the characterisation of finitely and countably additive probability
measures in terms of integration operators. This will be used in
Section~\ref{sec:giry-cod} to characterise the Giry monads as codensity monads.

\begin{defn}
Let $\Omega$ be a measurable space, and let $\phi$ be a function
\[
\Meas (\Omega,I) \to I.
\]
We say that $\phi$ is a \demph{finitely additive integration operator} on $\Omega$ if,
\begin{itemize}
\item it is \demph{affine}: $\phi(rf + (1-r)g) =r \phi(f) + (1-r) \phi(g)$ for all $f,g \in \Meas(\Omega,I)$ and $r \in I$, and
\item it is \demph{weakly averaging}: $\phi (\const{r}) = r$ for all $r \in I$.
\end{itemize}
Recall $\const{r}$ denotes the constant function with value $r$. In \cite{sturtz14},
finitely additive integration operators
were called \demph{weakly averaging affine functionals}. We call $\phi$ an 
\demph{integration operator} (possibly with the qualification \demph{countably
additive} to avoid ambiguity) if, additionally,
\begin{itemize}
\item it \demph{respects limits}: if $f_n \in \Meas(\Omega,I)$ is
a sequence of measurable functions converging pointwise to $0$, then
$\phi(f_n)$ converges to $0$.
\end{itemize}
\end{defn}

\begin{defn}
\label{defn:int-func}
Let $\Omega$ be a measurable space. Write $S\Omega$ for the set of finitely additive integration operators
on $\Omega$ and $T\Omega$ for the set of integration operators. Equip $S\Omega$ with the smallest
$\sigma$-algebra such that
\begin{align*}
\ev_f \from & S\Omega \to  I \\
	  	& \phi  \mapsto  \phi (f)
\end{align*}
is measurable for each $f \in \Meas(\Omega,I)$, and define a $\sigma$-algebra on $S\Omega$
similarly.

Given $g \from \Omega \to \Omega'$ in $\Meas$, define $Sg \from S\Omega \to S\Omega'$ by
\[
Sg(\phi)(f) = \phi(f\of g)
\]
for $\phi \in S\Omega$ and $f \in \Meas(\Omega,I)$, and define $Tg$ similarly. This makes $S$ and $T$ functors $\Meas \to \Meas$.
\end{defn}

The following two lemmas show that (finitely additive) integration operators preserve
 more structure than the definition suggests, and they will be used often throughout
the rest of this paper.

\begin{lem}
\label{lem:int-prop}
Let $\phi \in S\Omega$, $f,f' \in \Meas(\Omega, I)$ and $r \in [0,\infty)$. Then
\begin{enumerate}[(i)]
\item if $rf \in \Meas(\Omega,I)$ then
$\phi (rf) = r\phi(f)$,
\item if  $f + f' \in \Meas(\Omega,I)$ then $\phi(f+f')
= \phi(f) + \phi(f')$, and
\item if $f \leq f'$ pointwise then $\phi(f) \leq \phi(f')$.
\end{enumerate}
\end{lem}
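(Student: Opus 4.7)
The three statements all follow by clever use of the affineness and weakly averaging conditions, with $\bar 0$ (which satisfies $\phi(\bar 0) = 0$) serving as the main ``lever''. I would prove the parts in the order (i), (ii), (iii).

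For (i), I would split on whether $r \in [0,1]$ or $r > 1$. When $r \in [0,1]$, writing $rf = rf + (1-r)\bar 0$ as an affine combination and applying affineness together with $\phi(\bar 0) = 0$ yields $\phi(rf) = r\phi(f)$ immediately. When $r > 1$, the hypothesis $rf \in \Meas(\Omega,I)$ forces $f(\omega) \in [0,1/r]$ pointwise, so $1/r \in (0,1)$ and the identity $f = (1/r)(rf) + (1-1/r)\bar 0$ is a legitimate affine combination of elements of $\Meas(\Omega,I)$; applying affineness again (and $\phi(\bar 0)=0$) gives $\phi(f) = (1/r)\phi(rf)$, which rearranges to the desired equation.

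For (ii), the key trick is to bring the sum inside the unit interval by halving. Since $f+f' \in \Meas(\Omega,I)$ means $f(\omega)+f'(\omega) \le 1$, the functions $f/2$, $f'/2$ and $(f+f')/2$ all lie in $\Meas(\Omega,I)$. Now $(f+f')/2 = (1/2)f + (1/2)f'$, so affineness gives
\[
\phi\!\left(\tfrac{f+f'}{2}\right) = \tfrac{1}{2}\phi(f) + \tfrac{1}{2}\phi(f').
\]
On the other hand, by part (i) applied with $r = 1/2$, the left-hand side equals $\tfrac{1}{2}\phi(f+f')$; multiplying by $2$ yields additivity.

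For (iii), if $f \le f'$ pointwise then $f' - f$ is a measurable function into $I$ (it is non-negative and bounded above by $f' \le 1$), and $f + (f'-f) = f' \in \Meas(\Omega,I)$, so part (ii) applies to give $\phi(f') = \phi(f) + \phi(f'-f)$. Since $\phi$ takes values in $I = [0,1]$, the term $\phi(f'-f)$ is non-negative, so $\phi(f) \le \phi(f')$. There is no real obstacle here; the only subtlety is remembering that the affineness hypothesis only provides convex combinations with coefficients in $[0,1]$, which is why (i) must be handled in two cases and why the halving trick is needed in (ii).
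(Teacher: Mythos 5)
Your proof is correct and follows essentially the same route as the paper's (which merely sketches these steps): part (i) via affine combinations with $\const{0}$, part (ii) via the halving identity $\phi(f+f') = 2\phi\bigl(\tfrac{1}{2}f + \tfrac{1}{2}f'\bigr)$ as an instance of (i), and part (iii) by applying (ii) to $f' = f + (f'-f)$ and noting $\phi(f'-f) \geq 0$. Your explicit case split on $r \leq 1$ versus $r > 1$ in (i) is a detail the paper leaves implicit, but it is exactly what is needed.
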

\begin{proof}
(i) follows from the affine property and the fact that $\phi(\const{0}) = 0$.

(ii) follows from the affine property and
\[
\phi(f+f') = 2\phi\left(\frac{1}{2}f + \frac{1}{2}f'\right),
\]
which is an instance of (i).

(iii) follows from (ii) applied to $f'= f+ (f'-f)$, using the fact that $f'-f \in \Meas(\Omega,I)$
(in particular $\phi(f'-f)$ is defined and is $\geq 0$).
\end{proof}

\begin{lem}
\label{lem:int-lim}
Let $\phi \in T\Omega$. If $f_n, f \in \Meas(\Omega,I)$ such that $f_n \to f$ pointwise,
then $\phi(f_n) \to \phi(f)$.
\end{lem}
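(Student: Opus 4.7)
The plan is to reduce pointwise convergence with arbitrary limit $f$ to the hypothesis of respecting limits, which only applies to sequences converging to $0$. The obstacle is that the obvious candidate $f_n - f$ need not lie in $\Meas(\Omega,I)$ since it can be negative, so we cannot directly apply Lemma~\ref{lem:int-prop} (ii) to reduce $\phi(f_n)$ to $\phi(f) + \phi(f_n - f)$. The trick is to sandwich $f_n$ and $f$ between two sequences whose difference is non-negative.

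Concretely, I would set
\[
a_n = \max(f_n, f), \qquad b_n = \min(f_n, f).
\]
Both are measurable, take values in $I$, and satisfy $b_n \leq f_n, f \leq a_n$ pointwise. Moreover $a_n - b_n = |f_n - f|$ lies in $\Meas(\Omega,I)$ and converges pointwise to $0$.

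Applying Lemma~\ref{lem:int-prop} (iii) to both pairs of inequalities gives
\[
\phi(b_n) \leq \phi(f_n) \leq \phi(a_n), \qquad \phi(b_n) \leq \phi(f) \leq \phi(a_n),
\]
so
\[
|\phi(f_n) - \phi(f)| \leq \phi(a_n) - \phi(b_n).
\]
Since $b_n + (a_n - b_n) = a_n \in \Meas(\Omega,I)$, Lemma~\ref{lem:int-prop} (ii) applies to give $\phi(a_n) - \phi(b_n) = \phi(a_n - b_n)$. Finally, $a_n - b_n \to 0$ pointwise, so the respect-limits hypothesis on $\phi \in T\Omega$ yields $\phi(a_n - b_n) \to 0$, and hence $\phi(f_n) \to \phi(f)$.

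The only subtle point is checking that $\max$ and $\min$ of measurable $I$-valued functions are again measurable $I$-valued, which is standard, and that all intermediate sums stay in $I$ so that the additivity of $\phi$ on $\Meas(\Omega,I)$ (in the restricted form given by Lemma~\ref{lem:int-prop}) can actually be applied.
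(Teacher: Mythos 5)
Your proof is correct and takes essentially the same approach as the paper: the paper sets $g_n=\max(0,f_n-f)$ and $h_n=\max(0,f-f_n)$ and uses the exact identity $f_n+h_n=g_n+f$ with Lemma~\ref{lem:int-prop}(ii), while you merge these into the single sequence $a_n-b_n=|f_n-f|$ and close the estimate with the order-preservation of part (iii). Both arguments reduce to applying the respects-limits hypothesis to nonnegative measurable sequences tending to $0$, and yours has no gaps.
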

\begin{proof}
Let 
\[
g_n(\omega) = \max(0,f_n(\omega) - f(\omega)) \quad \text{and} \quad h_n(\omega) = \max(0,f(\omega) - f_n(\omega)).
\]
Then $g_n, h_n \to 0$ pointwise, and
\[
f_n + h_n = g_n + f,
\]
so the result follows from part (ii) of the previous lemma and the fact that 
$ \phi(g_n)\to 0$ and $ \phi(h_n) \to 0$.
\end{proof}

The following lemma allows us to reduce propositions about finitely additive integration
operators to special cases involving only simple functions.

\begin{lem}
\label{lem:int-sup}
For any $\phi \in S\Omega$ and $f \in \Meas(\Omega, I)$, we have
\begin{align*}
\phi(f) &= \sup \{ \phi(g) \such g \in \Meas(\Omega,I) \textrm{ is simple and } g \leq f \} \\
&= \inf \{ \phi(g) \such g \in \Meas(\Omega,I) \textrm{ is simple and } g \geq f \}.
\end{align*}

\end{lem}
\begin{proof}
Write $L =  \{ \phi(g) \such g \in \Meas(\Omega,I) \text{ is simple and } g \leq f \}$. The inequality
$\phi(f) \geq \sup L$ follows from Lemma~\ref{lem:int-prop}~(iii). For the other inequality, fix
$\epsilon > 0$ and choose $g$ simple such that $g \leq f$ and $f - g \leq \const{\epsilon}$ (this is possible by
Lemma~\ref{lem:int-approx}). Then
\begin{align*}
\phi(f) &= \phi (g + (f-g) ) = \phi(g) + \phi(f-g) \\
&\leq \phi(g) + \phi(\const{\epsilon}) = \phi(g) + \epsilon \\
&\leq \sup L + \epsilon.
\end{align*}
Since $\epsilon>0$ was arbitrary, $\phi (f) \leq \sup L$. The other claim is proved similarly.

\end{proof}

In the next three propositions we establish isomorphisms of functors $F \cong S$
and $G \cong T$, and in the fourth we transfer the monad structure of $\mnd{F}$
and $\mnd{G}$ across these isomorphisms. Parts of their proofs are
due to Sturtz \cite{sturtz14}, however Sturtz incorrectly claims that $G \cong S$
rather than $F \cong S$, so we include the proofs here for clarity. Sturtz has since
issued a corrected version \cite{sturtz15}.

\begin{prop}
\label{prop:giry-int-bij}
Let $\Omega$ be a measurable space, and $\phi$ a finitely additive integration operator on
$\Omega$. Define $\Lambda(\phi)$ by
\[
\Lambda(\phi)(A) = \phi(\chi_A)
\]
for $A\subseteq \Omega$ measurable. Then
\begin{enumerate}[(i)]
\item $\Lambda$ is a bijection $S\Omega \iso F\Omega$, and
\item $\Lambda$ restricts to a bijection $\Xi \from T\Omega \iso G\Omega$.
\end{enumerate}
\end{prop}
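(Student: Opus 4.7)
The plan is to construct an explicit inverse to $\Lambda$, namely the map $\Theta \colon F\Omega \to S\Omega$ sending $\pi \mapsto (f \mapsto \int_\Omega f \diff \pi)$, and then show the restriction works for the countably additive case. I would proceed in five steps.

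First, I would verify that the definitions land where they should. For $\phi \in S\Omega$: weak averaging gives $\Lambda(\phi)(\Omega) = \phi(\chi_\Omega) = \phi(\bar 1) = 1$, and for disjoint $A,B$ the identity $\chi_{A\cup B} = \chi_A + \chi_B$ combined with Lemma~\ref{lem:int-prop}(ii) gives finite additivity. For $\pi \in F\Omega$, linearity of the $D$-integral makes $\Theta(\pi)$ affine, and $\int_\Omega \bar r \diff \pi = r$ gives weak averaging.

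Next, the easy direction $\Lambda \circ \Theta = \id$: for measurable $A\subseteq \Omega$, $\Lambda(\Theta(\pi))(A) = \int_\Omega \chi_A \diff \pi = \pi(A)$, by definition of the integral on characteristic functions. The other direction $\Theta(\Lambda(\phi)) = \phi$ is where the real work lies, and I expect it to be the main obstacle. I would handle it in two sub-steps. For a simple function $f = \sum_{i=1}^n a_i \chi_{A_i}$ with $a_i \in I$ and the $A_i$ disjoint and measurable, repeated application of parts (i) and (ii) of Lemma~\ref{lem:int-prop} (each intermediate sum lying in $\Meas(\Omega,I)$ because the $A_i$ are disjoint) gives
\[
\phi(f) = \sum_{i=1}^n a_i \phi(\chi_{A_i}) = \sum_{i=1}^n a_i \Lambda(\phi)(A_i) = \int_\Omega f \diff \Lambda(\phi).
\]
For general $f \in \Meas(\Omega,I)$, take $f_n$ simple with $f_n \to f$ uniformly as in Lemma~\ref{lem:int-approx}, so $\int_\Omega f_n \diff \Lambda(\phi) \to \int_\Omega f \diff \Lambda(\phi)$. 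The key observation — and what makes the finitely additive case work without any limit-respecting hypothesis — is that any $\phi \in S\Omega$ is uniformly continuous on $\Meas(\Omega,I)$: if $\|f_n - f\|_\infty \leq \epsilon$, write $f_+ = \max(f - f_n, 0)$ and $f_- = \max(f_n - f, 0)$, both in $\Meas(\Omega,I)$ and bounded pointwise by $\epsilon$. Then $f + f_- = f_n + f_+$, so Lemma~\ref{lem:int-prop}(ii) gives $\phi(f) - \phi(f_n) = \phi(f_+) - \phi(f_-)$, and order-preservation together with weak averaging gives $0 \leq \phi(f_\pm) \leq \phi(\bar \epsilon) = \epsilon$, whence $|\phi(f) - \phi(f_n)| \leq \epsilon$. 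Hence $\phi(f_n) \to \phi(f)$, completing the identification $\phi(f) = \int_\Omega f \diff \Lambda(\phi)$.

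Finally, for (ii) I would check both directions. If $\phi \in T\Omega$ and $A_i$ are pairwise disjoint with union $A$, then $\chi_{\bigcup_{i\leq n}A_i} \to \chi_A$ pointwise, so the limit-respecting axiom combined with finite additivity of $\Lambda(\phi)$ forces $\sum_i \Lambda(\phi)(A_i) = \Lambda(\phi)(A)$, i.e.\ $\Lambda(\phi) \in G\Omega$. Conversely, if $\pi \in G\Omega$ and $f_n \in \Meas(\Omega,I)$ converges pointwise to $0$, then the $f_n$ are uniformly bounded by $1$, so the dominated convergence theorem (valid since $\pi$ is countably additive) gives $\int_\Omega f_n \diff \pi \to 0$, i.e.\ $\Theta(\pi) \in T\Omega$. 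Since $\Lambda$ and $\Theta$ are already mutual inverses as maps between $S\Omega$ and $F\Omega$, this shows $\Lambda$ restricts to the required bijection $\Xi \colon T\Omega \iso G\Omega$.
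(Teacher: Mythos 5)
Your proof is correct and follows essentially the same route as the paper: integration against $\Lambda(\phi)$ as the inverse, the trivial direction on characteristic functions, and the hard direction via simple functions controlled by order-preservation and weak averaging, with the same dominated-convergence and tail-set arguments for (ii). The only cosmetic difference is that you package the $\epsilon$-estimate as a uniform-continuity (1-Lipschitz) property of $\phi$ and approximate by a uniformly convergent sequence, whereas the paper bounds $\phi(f)$ between $\tilde{\Lambda}\Lambda(\phi)(f)$ and $\tilde{\Lambda}\Lambda(\phi)(f)+\epsilon$ using a single simple function $f'$ with $f\leq f'\leq f+\const{\epsilon}$; the underlying estimate is identical.
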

\begin{proof}
(i) It is straightforward to check that $\Lambda(\phi)$ is a
finitely additive probability measure.

Given $\pi \in F\Omega$, define $\tilde{\Lambda}(\pi) \from \Meas(\Omega, I) \to I$ by
\[
\tilde{\Lambda}(\pi)(f) = \int_{\Omega} f \diff \pi
\]
for $f \in \Meas(\Omega,I)$. The affine and weakly averaging properties of $\tilde{\Lambda}(\pi)$ are standard
properties of integration against finitely additive measures (see 4.4.13 (ii) in \cite{raoRao83}),
so $\tilde{\Lambda}$ does define a function $F\Omega \to S\Omega$.

Now we must show that $\Lambda$ and $\tilde{\Lambda}$ are inverse to one another. In
one direction,
\[
\Lambda\tilde{\Lambda}(\pi)(A) = \int_{\Omega} \chi_A \diff \pi = \pi (A)
\]
for $ \pi \in F\Omega$ and $A \subseteq \Omega$ measurable, by definition of the integral.
In the other, first note that for $\phi \in S\Omega$ and $A \subseteq \Omega$ measurable,
\begin{align*}
\tilde{\Lambda}\Lambda (\phi)(\chi_A) = \int_{\Omega} \chi_A \diff \Lambda (\phi)
= \Lambda (\phi) (A) = \phi(\chi_A).
\end{align*}
It follows by Lemma~\ref{lem:int-prop} (i) and (ii) that $\tilde{\Lambda}\Lambda (\phi)(g) =  \phi(g)$
for all simple $g \in \Meas(\Omega,I)$. Then if $f \in \Meas(\Omega,I)$,
\begin{align*}
\phi (f) &= \sup \{ \phi(g) \such g \in \Meas(\Omega, I) \text{ is simple and } g \leq f \} \\
&= \sup \{ \tilde{\Lambda}\Lambda(\phi)(g) \such g \in \Meas(\Omega, I) \text{ is simple and } g \leq f \} \\
&= \tilde{\Lambda}\Lambda(\phi)(f),
\end{align*}
by Lemma~\ref{lem:int-sup}.


(ii) Suppose $\pi \in G\Omega$, and $f_n \to 0$ pointwise. Then
\[
\int_{\Omega} f_n \diff \pi \to 0
\]
by the dominated convergence theorem. So $\tilde{\Lambda}(\pi) \in T\Omega$.

Now suppose $\phi \in T\Omega$, and that $(A_n)_{n=1}^{\infty}$ is a disjoint family of measurable
sets. Write
\[
B_k = \bigcup_{n=k}^{\infty} A_k
\]
and $A = B_1$. Then $\chi_{B_k} \to 0$ pointwise, so $\Lambda(\phi)(B_k) \to 0$. But
\[
\Lambda(\phi)(A) = \sum_{n=1}^{k-1}\Lambda(\phi)(A_n) +\Lambda(\phi)(B_k)  \to \sum_{n=1}^{\infty}\Lambda(\phi)(A_n)
\]
as $k \to \infty$, so $\Lambda(\phi) \in G\Omega$ as required.
\end{proof}

\begin{prop}
\label{prop:bij-meas}
The bijections $\Lambda$ and $\tilde{\Lambda}$ are measurable (and hence so are $\Xi$
and $\tilde{\Xi}$).
\end{prop}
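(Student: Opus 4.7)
The plan is to use the universal property of the generated $\sigma$-algebras on $S\Omega$ and $F\Omega$: to prove a map into such a space is measurable, it suffices to check that postcomposition with each generating evaluation map is measurable. Both directions then reduce to one-line unwindings, with the only substantive input being Lemma~\ref{lem:int-meas} for one of them.

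\textbf{Measurability of $\Lambda$.} The $\sigma$-algebra on $F\Omega$ is generated by the maps $\ev_A$ for $A \subseteq \Omega$ measurable, so it suffices to show that $\ev_A \of \Lambda \from S\Omega \to I$ is measurable for every such $A$. Unwinding the definition,
\[
(\ev_A \of \Lambda)(\phi) = \Lambda(\phi)(A) = \phi(\chi_A) = \ev_{\chi_A}(\phi).
\]
Since $\chi_A \in \Meas(\Omega,I)$, the map $\ev_{\chi_A} \from S\Omega \to I$ is measurable by the definition of the $\sigma$-algebra on $S\Omega$. Hence $\Lambda$ is measurable.

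\textbf{Measurability of $\tilde{\Lambda}$.} Dually, the $\sigma$-algebra on $S\Omega$ is generated by the maps $\ev_f$ for $f \in \Meas(\Omega,I)$, so it suffices to show that $\ev_f \of \tilde{\Lambda} \from F\Omega \to I$ is measurable. But
\[
(\ev_f \of \tilde{\Lambda})(\pi) = \tilde{\Lambda}(\pi)(f) = \int_{\Omega} f \diff \pi,
\]
which is exactly the map $\intf$ shown to be measurable in Lemma~\ref{lem:int-meas}. Hence $\tilde{\Lambda}$ is measurable.

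\textbf{Restrictions to $T\Omega$ and $G\Omega$.} Since $\Xi$ and $\tilde{\Xi}$ are the restrictions of $\Lambda$ and $\tilde{\Lambda}$ to the subspaces $T\Omega \subseteq S\Omega$ and $G\Omega \subseteq F\Omega$, and these subspaces carry the subspace $\sigma$-algebra, measurability of $\Xi$ and $\tilde{\Xi}$ follows immediately. The only potential obstacle is Lemma~\ref{lem:int-meas}, but it has already been established, so the proof is essentially a matter of unpacking definitions.
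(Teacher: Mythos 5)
Your proof is correct and follows essentially the same route as the paper's: both directions reduce measurability to the generating evaluation maps, with $\ev_A \of \Lambda = \ev_{\chi_A}$ handling one direction and Lemma~\ref{lem:int-meas} handling the other. The explicit remark about the subspace $\sigma$-algebras on $T\Omega$ and $G\Omega$ is a small addition the paper leaves implicit.
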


\begin{proof}
The $\sigma$-algebra on $F\Omega$ is generated by the maps
\[
\ev_A \from F\Omega \to I
\]
for measurable $A$, so in order to show that $\Lambda$ is measurable it is sufficient
to show that each $\ev_A \of \Lambda$ is measurable. But the diagram
\[
\xymatrix{
{S\Omega}\ar[r]^{\Lambda}\ar[dr]_{\ev_{\chi_A}} & {F\Omega}\ar[d]^{\ev_A} \\
& I
}
\]
commutes, and $\ev_{\chi_A} \from S\Omega \to I$ is measurable by definition of the
$\sigma$-algebra on $T\Omega$.

To show that $\tilde{\Lambda}$ is measurable we must show that $\ev_f \of \tilde{\Lambda}$ is
measurable for each $f \in \Meas(\Omega,I)$. But
\[
\ev_f \of \tilde{\Lambda}(\pi) = \int_{\Omega} f \diff \pi,
\]
So the composite is measurable by Lemma~\ref{lem:int-meas}.
\end{proof}

\begin{prop}
The maps $\Lambda \from S\Omega \to F\Omega$ and $\Xi \from T\Omega \to G\Omega$ are natural in $\Omega$.
\end{prop}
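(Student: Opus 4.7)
The plan is to verify the naturality square directly for $\Lambda$, by unwinding the definitions of $\Lambda$, $Sg$ and $Fg$, and then deduce naturality of $\Xi$ by restriction. Concretely, for a measurable $g \from \Omega \to \Omega'$, I need to show that the square
\[
\xymatrix{
S\Omega \ar[r]^{\Lambda_\Omega} \ar[d]_{Sg} & F\Omega \ar[d]^{Fg} \\
S\Omega' \ar[r]_{\Lambda_{\Omega'}} & F\Omega'
}
\]
commutes, i.e. that $\Lambda_{\Omega'}(Sg(\phi)) = Fg(\Lambda_\Omega(\phi))$ for every $\phi \in S\Omega$.

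First I would test both sides on an arbitrary measurable set $A' \subseteq \Omega'$. Unwinding, the left side gives
\[
\Lambda_{\Omega'}(Sg(\phi))(A') = Sg(\phi)(\chi_{A'}) = \phi(\chi_{A'} \of g),
\]
and the right side gives
\[
Fg(\Lambda_\Omega(\phi))(A') = \Lambda_\Omega(\phi)(g^{-1}(A')) = \phi(\chi_{g^{-1}(A')}).
\]
Thus the whole verification reduces to the elementary identity $\chi_{A'} \of g = \chi_{g^{-1}(A')}$, which is immediate from the definitions of characteristic function and preimage. There is no real obstacle here; the point of the argument is just to line up the two different descriptions of how $g$ acts on measures versus how it acts on integration operators, and the compatibility is witnessed by this identity on characteristic functions.

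For $\Xi$, since it is by definition the restriction of $\Lambda$ to $T\Omega \subseteq S\Omega$, and since $Sg$ restricts to $Tg$ (because if $\phi$ respects limits and $f_n \to 0$ pointwise in $\Meas(\Omega', I)$, then $f_n \of g \to 0$ pointwise in $\Meas(\Omega, I)$, so $Sg(\phi)(f_n) = \phi(f_n \of g) \to 0$), while $Fg$ restricts to $Gg$ by construction, the naturality square for $\Xi$ sits inside the one for $\Lambda$ and commutes automatically. Thus naturality of $\Xi$ requires no separate argument beyond observing these restrictions, and the proposition follows.
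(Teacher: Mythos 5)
Your proof is correct and matches the paper's own (the paper calls it ``a straightforward verification'' and the computation it has in mind is exactly your chain $Fg \of \Lambda(\phi)(A') = \phi(\chi_{g^{-1}(A')}) = \phi(\chi_{A'} \of g) = \Lambda \of Sg(\phi)(A')$, followed by noting that $\Xi$ is the restriction of $\Lambda$). Your extra remark that $Sg$ restricts to $Tg$ is a sensible bit of added care that the paper leaves implicit in Definition~\ref{defn:int-func}.
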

\begin{proof}
A straightforward verification, or see Theorem~4.4 in \cite{sturtz14}.
\end{proof}

Thus $\Lambda$ and $\Xi$ are natural isomorphisms $S \iso F$ and $T \iso G$.
Since $F$ and $G$ carry monad structures, there are unique monad structures
on $S$ and $T$ making $\Lambda$ and $\Xi$ into morphisms of monads, giving
an alternative description of the Giry monads.

\begin{prop}
\label{prop:giry-int-mnd}
The monad structure $\mnd{S} = (S,\eta^{\mnd{S}},\mu^{\mnd{S}})$ on $S$
induced by $\Lambda$ is given by
\[
\eta_{\Omega}^{\mnd{S}}(\omega)(f) = f(\omega)
\]
for $\Omega \in \Meas$, $\omega \in \Omega$ and $f \in \Meas(\Omega,I)$, and
\[
\mu_{\Omega}^{\mnd{S}}(\psi)(f) = \psi (\ev_f),
\]
for $\psi \in SS\Omega$. Similarly for $\Xi \from G \iso T$.
\end{prop}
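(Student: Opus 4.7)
The plan is to exploit the uniqueness of monad structure transfer along a natural isomorphism. Since $\Lambda \colon S \to F$ is a natural isomorphism of functors, there is a unique monad structure on $S$ making $\Lambda$ into a morphism of monads, characterised by
\[
\Lambda_{\Omega} \of \eta^{\mnd{S}}_{\Omega} = \eta^{\mnd{F}}_{\Omega}
\quad \text{and} \quad
\Lambda_{\Omega} \of \mu^{\mnd{S}}_{\Omega} = \mu^{\mnd{F}}_{\Omega} \of (F\Lambda_{\Omega} \of \Lambda_{S\Omega}),
\]
the composite on the right being the horizontal composite $\Lambda \ast \Lambda$. I will verify that the proposed formulas land in $S\Omega$ and satisfy both equations (and the claim for $\Xi$ will follow identically).

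For the unit, $f \mapsto f(\omega)$ is trivially affine, weakly averaging, and limit-respecting, so lies in $T\Omega \subseteq S\Omega$; and $\Lambda_{\Omega}(\eta^{\mnd{S}}_{\Omega}(\omega))(A) = \eta^{\mnd{S}}_{\Omega}(\omega)(\chi_A) = \chi_A(\omega) = \eta^{\mnd{F}}_{\Omega}(\omega)(A)$, so the equation holds.

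The real work is the multiplication (where I read ``$\psi \in SS\Omega$'' in place of the ``$\psi \in FF\Omega$'' of the statement). The maps $\ev_f \colon S\Omega \to I$ are measurable by definition, so $\psi(\ev_f)$ is well-defined; and because $f \mapsto \ev_f$ is $\reals$-linear and sends $\const{r}$ to $\const{r}$, the proposed $\mu^{\mnd{S}}_{\Omega}(\psi)$ inherits the affine and weakly averaging properties from $\psi$. For the monad-morphism equation, fix measurable $A \subseteq \Omega$ and compute, using the change of variables formula (Lemma~\ref{lem:change-var}),
\[
\mu^{\mnd{F}}_{\Omega}(F\Lambda_{\Omega}(\Lambda_{S\Omega}(\psi)))(A)
= \int_{F\Omega} \ev_A \diff F\Lambda_{\Omega}(\Lambda_{S\Omega}(\psi))
= \int_{S\Omega} \ev_A \of \Lambda_{\Omega} \diff \Lambda_{S\Omega}(\psi).
\]
Now $\ev_A \of \Lambda_{\Omega}(\phi) = \phi(\chi_A) = \ev_{\chi_A}(\phi)$, and by Proposition~\ref{prop:giry-int-bij} the functional $\int_{S\Omega}(-)\diff \Lambda_{S\Omega}(\psi) = \tilde{\Lambda}_{S\Omega}(\Lambda_{S\Omega}(\psi))$ is just $\psi$, so the chain above equals $\psi(\ev_{\chi_A})$, matching $\Lambda_{\Omega}(\mu^{\mnd{S}}_{\Omega}(\psi))(A) = \mu^{\mnd{S}}_{\Omega}(\psi)(\chi_A) = \psi(\ev_{\chi_A})$.

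The case of $\Xi$ is identical, after observing that $f \mapsto \psi(\ev_f)$ respects limits whenever $\psi \in TT\Omega$: if $f_n \to 0$ pointwise then each $\phi \in T\Omega$ gives $\ev_{f_n}(\phi) = \phi(f_n) \to 0$ by Lemma~\ref{lem:int-lim}, so $\ev_{f_n} \to 0$ pointwise on $T\Omega$, and a second application of limit-respecting for $\psi$ yields $\psi(\ev_{f_n}) \to 0$. The only mild obstacle throughout is bookkeeping for the horizontal composite $\Lambda \ast \Lambda$ and spotting the identification $\ev_A \of \Lambda_{\Omega} = \ev_{\chi_A}$; everything else then reduces to change of variables and the inversion identity $\tilde{\Lambda} \of \Lambda = \id$.
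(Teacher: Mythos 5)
Your proof is correct and follows essentially the same route as the paper: verify the two monad-morphism squares, with the multiplication square handled via the change of variables formula, the identification $\ev_A \of \Lambda_{\Omega} = \ev_{\chi_A}$, and the inversion identity $\tilde{\Lambda}\of\Lambda = \id$. Your extra checks (that the formulas land in $S\Omega$, the limit-respecting argument for the $\Xi$ case, and the correction of the typo $\psi \in FF\Omega$ to $\psi \in SS\Omega$) go slightly beyond what the paper writes out but do not change the approach.
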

The second of these expressions deserves some explanation. Here $\psi \in SS\Omega$
is an affine and weakly averaging function
\[
\psi \from \Meas(S\Omega,I) \to I.
\]
The elements of $S\Omega$ are functions $\Meas(\Omega,I) \to I$ and $f\in \Meas(\Omega,I)$,
so we have $\ev_f \from S\Omega \to I$, and this is measurable. Therefore $\psi$
 can be applied to $\ev_f$, yielding an element of $I$.

\begin{proof}[Proof of Proposition~\ref{prop:giry-int-mnd}.]
We know that the unit and multiplication of the induced monad structure on $S$ make the diagrams
\[
\xymatrix{
{S\Omega}\ar[rr]^{\Lambda_{\Omega}} & & {F\Omega} \\
& {\Omega}\ar[ul]^{\eta_{\Omega}^{\mnd{S}}}\ar[ur]_{\eta_{\Omega}^{\mnd{F}}} &
}
\]
and
\[
\xymatrix{
{SS\Omega}\ar[r]^{\Lambda_{S\Omega}}\ar[d]_{\mu_{\Omega}^{\mnd{S}}} & {FS\Omega}\ar[r]^{F\Lambda_{\Omega}} & {FF\Omega}\ar[d]^{\mu_{\Omega}^{\mnd{F}}} \\
{S\Omega}\ar[rr]_{\Lambda_{\Omega}} & & {F\Omega}
}
\]
commute. Therefore we have $\eta_{\Omega}^{\mnd{S}} = \tilde{\Lambda}_{\Omega} \of \eta_{\Omega}^{\mnd{F}}$
and $\mu_{\Omega}^{\mnd{S}} = \tilde{\Lambda}_{\Omega} \of \mu_{\Omega}^{\mnd{F}} \of F\Lambda_{\Omega} \of \Lambda_{S\Omega}$.

If $\omega \in \Omega$ and $ f \in \Meas(\Omega,I)$ then
\begin{align*}
\eta_{\Omega}^{\mnd{S}}(\omega)(f) &= \tilde{\Lambda}_{\Omega} \of \eta_{\Omega}^{\mnd{F}}(\omega)(f) \\
& = \int_{\Omega} f \diff  (\eta_{\Omega}^{\mnd{F}}(\omega)) \\
& = f(\omega).
\end{align*}

Now suppose $\psi \in SS\Omega$. Then, if $A \subseteq \Omega$ is measurable, we have
\begin{align*}
\mu_{\Omega}^{\mnd{S}} (\psi)(\chi_A) & = \mu_{\Omega}^{\mnd{F}} \of F\Lambda_{\Omega} \of\Lambda_{S\Omega} (\psi)(A) \\
&= \int_{F\Omega} \ev_A\diff (F\Lambda_{\Omega} \of\Lambda_{S\Omega}(\psi)) \\
&= \int_{S\Omega} \ev_A \of \Lambda_{\Omega}\diff (\Lambda_{S\Omega}(\psi))\\
&= \int_{S\Omega} \ev_{\chi_A} \diff (\Lambda_{S\Omega}(\psi)) \\
&= \tilde{\Lambda}_{S\Omega} \of\Lambda_{S\Omega} (\psi)(\ev_{\chi_A}) \\
&= \psi (\ev_{\chi_A}).
\end{align*}
Note that  if $g = \sum_{i=1}^n a_i \chi_{A_i}$ is a simple function in $\Meas(\Omega,I)$ then $\ev_g = \sum_{i=1}^{n} a_i \ev_{\chi_{A_i}}$ as elements
of $\Meas(S\Omega, I)$, so
\[
\mu_{\Omega}^{\mnd{S}} (\psi)(g) =  \sum_{i=1}^n a_i \mu_{\Omega}^{\mnd{S}} (\psi)(\chi_{A_i}) =  \sum_{i=1}^n a_i \psi(\ev_{\chi_{A_i}}) = \psi(\ev_g).
\]
Now if $f \in \Meas(\Omega,I)$, we have
\begin{align*}
\mu_{\Omega}^{\mnd{S}} (\psi)(f) &= \sup \{ \mu_{\Omega}^{\mnd{S}} (\psi)(g) \such g \in \Meas(\Omega,I) \textrm{ is simple and } g \leq f \} \\
& =  \sup \{  \psi(\ev_g) \such g \in \Meas(\Omega,I) \textrm{ is simple and } g \leq f \}\\
& \leq \psi(\ev_f)\\
&\leq \inf \{ \psi(\ev_g) \such g \in \Meas(\Omega,I) \textrm{ is simple and } g \geq f \}\\
&= \inf \{ \mu_{\Omega}^{\mnd{S}} (\psi)(g) \such g \in \Meas(\Omega,I) \textrm{ is simple and } g \geq f \}\\
&= \mu_{\Omega}^{\mnd{S}} (\psi)(f),
\end{align*}
where the first and last equalities are Lemma~\ref{lem:int-sup}, and the inequalities are due to the facts that if $f \leq g$ then $\ev_f \leq \ev_g$ and
that $\psi$ is order-preserving. Hence $\mu_{\Omega}^{\mnd{S}} (\psi)(f) = \psi(\ev_f)$ as required. The proof for $\Xi$ is similar.


%

\end{proof}

\section{Review of codensity monads}
\label{sec:cod}

In this section we review the basics of codensity monads. A more thorough
introduction can be found in \cite{leinster13}. The main purpose of this section,
besides a review of the definitions, is to obtain a description of codensity monads
that will make it easy to establish an isomorphism of monads between the codensity
monads defined in Section~\ref{sec:giry-cod} and the monads defined in Section~\ref{sec:int}.
 This description is given by Equations \bref{eqn:gen-elt-func}, \bref{eqn:gen-elt-unit}
and \bref{eqn:gen-elt-mult}.

Let $\scat{C}$ be a small category, $\cat{M}$ a complete, locally small category, and
$U \from \scat{C} \to \cat{M}$ a functor. Then the right
Kan extension of $U$ along itself always exists; it consists of a functor
$T^U \from \cat{M} \to \cat{M}$ and a natural transformation
$\kappa^U \from T^U U \to U$, which are defined by the following universal property:
if $H \from \cat{M} \to \cat{M}$ and $\lambda \from HU \to U$, then there is a
unique $\tau \from H \to T^U$ such that
\[
\vcenter{
\xymatrix@=50pt{
{\scat{C}}\ar[r]^{U}\ar[dr]_U\druppertwocell\omit{<-3.3>\kappa^U} & {\cat{M}}\dtwocell^H_{<1.2>T^U}{\tau} \\
& {\cat{M}}
}}
\quad = \quad
\vcenter{
\xymatrix@=50pt{
{\scat{C}}\ar[r]^U\ar[dr]_U\druppertwocell\omit{<-3.3>\lambda} & {\cat{M}}\ar[d]^H \\
& {\cat{M}}.
}}
\]
We make $T^U$ the endofunctor part of a monad
 $\mnd{T}^U = (T^U,\eta^U,\mu^U)$, called the \demph{codensity monad
of $G$}, defining $\eta^U$ and $\mu^U$ using the universal property
of $T^U$ as follows:
\[
\vcenter{
\xymatrix@=50pt{
{\scat{C}}\ar[r]^{U}\ar[dr]_U\druppertwocell\omit{<-3.3>\kappa^U} & {\cat{M}}\dtwocell^{<2>\id_{\cat{M}}}_{<1.2>T^U}{\eta^U} \\
& {\cat{M}}
}}
\quad = \quad
\vcenter{
\xymatrix@=50pt{
{\scat{C}}\ar[r]^U\druppertwocell\omit{=<-3.3>}\ar[dr]_U & {\cat{M}}\ar@{=}[d] \\
& {\cat{M}}
}}
\]
and
\[
\vcenter{
\xymatrix@C=20pt@R=40pt{
{\scat{C}}\ar[ddrr]_U\ar[rr]^U\ddrruppertwocell\omit{<-3>\kappa^U}  & & {\cat{M}}\ar[dd]|{T^U}\ar[dr]^{T^U}\dduppertwocell\omit{<-3>\mu^U} &\\
 & & & {\cat{M}}\ar[dl]^{T^U} \\
 & &{\cat{M}} &
}}
\quad = \quad
\vcenter{
\xymatrix@C=70pt@R=40pt{
{\scat{C}}\ar[r]^U\ar[dr]_U\ar[ddr]_U\druppertwocell\omit{<-2.5> \; \kappa^U}\ddruppertwocell\omit{<-2.5>\kappa^U} & {\cat{M}}\ar[d]^{T^U} \\
& {\cat{M}}\ar[d]^{T^U} \\
&{\cat{M}.}
}}
\]
The fact that these maps satisfy the monad axioms follows from the uniqueness
part of the universal property.

It will also be useful to have a more explicit description of $\eta^U$
and $\mu^U$. The end formula for right Kan extensions (\cite{maclane71} X.4) gives
\[
T^Um \iso \int_{c \in \scat{C}} [\cat{M}(m,Uc),Uc].
\]
Let
\[
\ev_f \from  \int_{c \in \scat{C}} [\cat{M}(m,Uc),Uc] \to Uc
\]
be the canonical limit projection (where $f \from m \to Uc$ in $\cat{M}$).

If $g \from m \to m'$, then $T^Ug$ is defined to be the unique morphism making
\[
\xymatrix{
{T^Um}\ar[dr]^{\ev_{f\of g}}\ar[d]_{T^Ug} & \\
{T^U m'}\ar[r]_{\ev_f} & Uc
}
\]
commute for each $f \from m' \to Uc$.

We will now describe the functor and monad structure of $T^U$ in terms of
generalised elements. Recall that if $m \in \cat{M}$, a \demph{generalised element $e$
with shape $s \in \cat{M}$}, or \demph{$s$-element}, of $m$ is simply a morphism
$e \from s \to m$, and we write
\[
e \in_s m.
\]
The shape $s$ has also been called the 
\demph{stage of definition} of $e$, for example in \cite{kock06}.

Any morphism $g \from m \to m'$ defines a function (also denoted $g$) mapping $s$-elements
 of $m$ to $s$-elements of $m'$:
if $e \in_s m$, then
\[
g(e) = g \of e \in_s m'.
\]
Furthermore, a consequence of the Yoneda lemma is that any such function
defined on generalised elements corresponds to a unique morphism $m \to m'$
(provided it is natural in $s$). This provides a convenient way of describing
morphisms in $\cat{M}$. Note that
\begin{align*}
\cat{M}(s,T^Um) &\iso \cat{M}\left(s, \int_{c \in \scat{C}} [\cat{M}(m,Uc),Uc] \right)\\
&\iso \int_{c\in \scat{C}}[\cat{M}(m,Uc),\cat{M}(s,Uc)],
\end{align*}
so, by the nature of limits in $\Set$, an $s$-element of $T^Um$ can be
thought of as a family of functions (natural in $c$) that map morphisms $m \to Uc$
to $s$-elements of $Uc$. Given $\alpha \in_s T^Um$, and
$f \from m \to Uc$,
\[
\ev_f (\alpha) = \alpha_c (f) \in_s Uc.
\]
Thus, in terms of generalised elements, the functor $T^U$ is defined by
\begin{equation}
\label{eqn:gen-elt-func}
((T^Ug)(\alpha))_c(f) = \ev_f \of (T^U g) (\alpha) = \ev_{f\of g} (\alpha) = \alpha_c(f\of g),
\end{equation}
where $g \from m \to m'$ and $f \from m' \to Uc$.
In \cite{kock66}, Kock describes $\eta^U$ and $\mu^U$ in terms of the equations
\[
\ev_f \of\eta^U_m = f \quad \text{and} \quad \ev_f \of \mu^U_m = \ev_{\ev_f}
\]
for each $c \in \scat{C}$ and $f\in \cat{M}(m,Uc)$. Translating these into generalised element notation,
$\eta^U$ and $\mu^U$ are defined by
\begin{equation}
\label{eqn:gen-elt-unit}
(\eta^U_m (e))_c(f) = \ev_f \of \eta^U_m (e) =  f(e)
\end{equation}
and
\begin{equation}
\label{eqn:gen-elt-mult}
(\mu^U_m (\beta))_c(f) = \ev_f \of \mu^U_m(\beta) =\ev_{\ev_f}(\beta) =  \beta_c(\ev_f)
\end{equation}
where $e \in_s m$ and $\beta \in_s T^UT^Um$.

\section{Probability measures via codensity}
\label{sec:giry-cod}

We will now show that the finitely additive Giry monad and the Giry monad arise
as codensity monads.

\begin{defn}
A \demph{convex set} is a convex subset of a real vector space. That is, $c \subseteq V$ is
a convex set if for all $x,y \in c$ and $r \in I$ we have $rx + (1-r)y \in c$. We write
\[
x \psub{r} y = rx + (1-r)y.
\]

If $c,c'$ are convex sets, then an \demph{affine map} $h \from c \to c'$ is a function such that
\[
h(x \psub{r} y) = h(x) \psub{r} h(y)
\]
for all $x,y \in c$ and $r \in I$.
\end{defn}

There is a more abstract notion of a convex space, investigated in \cite{fritz09}, namely
an algebra for the distribution monad mentioned in the introduction.
These more general convex spaces are used by Sturtz in \cite{sturtz14}.
However, all the convex spaces we will be concerned with are convex subsets of vector
spaces, so we omit the more general definition.

We choose the term ``affine map" rather than ``convex map"  to avoid confusion
with the notion of a ``convex function" (a real-valued
function with convex epigraph). This is potentially ambiguous: the term affine is already
used for a map between vector spaces that preserves \emph{affine} combinations
(i.e.\ linear combinations of the form $rx + (1-r)y$ where $r \in \reals$) rather than
just \emph{convex} combinations (those for which $r \in I$). However it is easily seen
that a map preserving convex combinations also preserves whatever affine combinations
exist in the domain.
Moreover, we have the following useful result:

\begin{lem}
\label{lem:aff-ext}
Let $c$ and $c'$ be convex subsets of real vector spaces $V$ and $V'$, and let $h \from
c \to c'$ be an affine map. Then $h$ has a unique affine extension $\aff(c) \to \aff(c')$,
where
\[
\aff(c) = \{ru + (1-r)v \such u, v \in c \text{ and } r \in \reals \}
\]
is the \demph{affine span} of $c$ in $V$.
\end{lem}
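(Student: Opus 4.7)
The plan is to separate existence and uniqueness. For \emph{uniqueness}, observe that any affine map defined on $\aff(c)$ preserves all binary affine combinations of its elements, so an affine extension $\tilde h$ of $h$ is forced to satisfy $\tilde h(ru + (1-r)v) = r h(u) + (1-r) h(v)$ for $u,v \in c$ and $r \in \reals$. Since every element of $\aff(c)$ has such a representation, this pins down $\tilde h$ completely.

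For \emph{existence}, I would take the formula dictated by uniqueness as a definition, $\tilde h(ru + (1-r)v) := r h(u) + (1-r) h(v)$, and must verify (i) well-definedness, (ii) that the values lie in $\aff(c')$, (iii) that $\tilde h$ extends $h$, and (iv) affineness. Items (ii)--(iv) are immediate from the formula; the substantive point is (i). I would reduce (i) to a single claim about $h$: whenever $z_1,\dots,z_n \in c$ and $\gamma_1,\dots,\gamma_n \in \reals$ satisfy $\sum_k \gamma_k = 0$ and $\sum_k \gamma_k z_k = 0$ in $V$, we have $\sum_k \gamma_k h(z_k) = 0$ in $V'$. Applied with $n=4$, coefficients $(r, 1-r, -r', -(1-r'))$, and points $(u, v, u', v')$, this claim shows that $\tilde h(ru + (1-r)v)$ is independent of the chosen representation.

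To prove the claim I would first observe, by induction on $n$ using binary affinity, that $h$ preserves every $n$-ary convex combination of points of $c$. Then, given $\gamma_k, z_k$ as above, partition the indices by sign: let $P = \{k : \gamma_k > 0\}$, $N = \{k : \gamma_k < 0\}$, and $s = \sum_{k \in P} \gamma_k = -\sum_{k \in N} \gamma_k$. If $s = 0$ then all nonzero $\gamma_k$ are absent and the claim is trivial; otherwise $s > 0$ and the relation $\sum_k \gamma_k z_k = 0$ rewrites as
\[
\sum_{k \in P} (\gamma_k/s)\, z_k \;=\; \sum_{k \in N} (-\gamma_k/s)\, z_k,
\]
an equality of two convex combinations of points of $c$, hence of two points of $c$ on which $h$ must take the same value. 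Applying $h$, using preservation of convex combinations on each side, and multiplying through by $s$ yields $\sum_k \gamma_k h(z_k) = 0$.

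The main obstacle is this well-definedness step, and specifically the sign-splitting reduction that lets me deploy preservation of convex combinations (the only lever provided by affinity of $h$). Once well-definedness is in hand, the remaining properties of $\tilde h$ are formal consequences of the defining formula, and the degenerate cases ($s = 0$, or $P$ or $N$ empty) all collapse to trivialities.
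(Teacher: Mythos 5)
Your proof is correct and follows essentially the same route as the paper, which simply defines the extension by $h(ru + (1-r)v) = rh(u) + (1-r)h(v)$ and declares well-definedness and affineness ``straightforward to check''; your sign-splitting reduction to preservation of convex combinations is a valid way of supplying the verification the paper omits. The only small gloss is that affineness of the extension is not literally immediate from the formula (one must rewrite an affine combination of two points of $\aff(c)$ as an affine combination of points of $c$), but it follows from the same $n$-term claim you establish for well-definedness.
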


\begin{proof}
Define
\[
h(ru + (1-r)v) = rh(u) + (1-r)h(v).
\]
It is straightforward to check that this is well-defined and affine.
\end{proof}

The following corollary will be used in the proof of Proposition~\ref{prop:conv-bound}.

\begin{cor}
\label{cor:lin-ext}
Let $\phi \from \Meas(\Omega,I) \to I$ be a finitely additive integration operator.
Then $\phi$ has a unique linear extension
\[
\phi \from \Meas_b (\Omega,\reals) \to \reals,
\]
where $\Meas_b(\Omega,\reals)$ denotes the vector space of bounded measurable
maps $\Omega \to \reals$.
\end{cor}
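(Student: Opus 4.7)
The plan is to deduce this from Lemma~\ref{lem:aff-ext} applied to the affine map $\phi \from \Meas(\Omega,I) \to I$, viewed as a map between convex subsets of the real vector spaces $\Meas_b(\Omega,\reals)$ and $\reals$. The finitely additive integration operator $\phi$ is affine by definition, so Lemma~\ref{lem:aff-ext} yields a unique affine extension $\aff(\Meas(\Omega,I)) \to \aff(I)$.

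The first substantive step is to identify the affine spans. Clearly $\aff(I) = \reals$ since $I$ contains two distinct points. For the domain, I would show $\aff(\Meas(\Omega,I)) = \Meas_b(\Omega,\reals)$. The inclusion $\subseteq$ is immediate, since any $\reals$-linear combination of bounded functions is bounded and measurable. For $\supseteq$, given $f \in \Meas_b(\Omega,\reals)$ with $|f| \leq M$, I would write $f$ explicitly as a binary affine combination of $I$-valued measurable functions; for instance,
\[
f = (2M+1)\cdot u - 2M \cdot \const{\tfrac{1}{2}}, \qquad u = \frac{f + M\const{1}}{2M+1},
\]
where a straightforward bound check gives $0 \leq u \leq 1$ pointwise, so $u \in \Meas(\Omega,I)$, and the two coefficients sum to $1$.

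With these identifications, Lemma~\ref{lem:aff-ext} produces a unique affine extension $\phi \from \Meas_b(\Omega,\reals) \to \reals$. To upgrade from affine to linear, I would use the weakly averaging property: $\phi(\const{0}) = 0$. For any affine map $h$ between real vector spaces with $h(0) = 0$, one has $h(rx) = h(rx + (1-r)\cdot 0) = r h(x)$ for all $r \in \reals$, and $h(x+y) = 2h(\tfrac{1}{2}x + \tfrac{1}{2}y) = h(x) + h(y)$; hence $h$ is linear. Uniqueness of the linear extension follows from the uniqueness clause of Lemma~\ref{lem:aff-ext}, since any linear extension is in particular an affine extension.

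There is no real obstacle here — the only mildly fiddly point is writing down an explicit binary affine decomposition in step one, which is why I sketched it above rather than leaving it implicit.
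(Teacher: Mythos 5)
Your proposal is correct and follows essentially the same route as the paper: identify $\aff(\Meas(\Omega,I)) = \Meas_b(\Omega,\reals)$ and $\aff(I) = \reals$, apply Lemma~\ref{lem:aff-ext}, and use $\phi(\const{0}) = 0$ to upgrade the affine extension to a linear one. You have merely filled in details the paper leaves implicit (the explicit affine decomposition of a bounded $f$ and the affine-plus-fixes-zero-implies-linear computation), both of which check out.
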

\begin{proof}
Regarding $\Meas(\Omega,I)$ as a convex set in $\Meas(\Omega,\reals)$, we have
\[
\aff(\Meas(\Omega,I)) = \Meas_b(\Omega,\reals),
\]
and $\aff(I) = \reals$ as a subset of $\reals$, so there is a unique affine extension by the
previous lemma. Moreover, since $\phi$ preserves $0$, the extension is in fact linear.
\end{proof}

The domain categories of the functors whose codensity monads we will prove to be the Giry monads
are both full subcategories of the category of convex sets.

\begin{defn}
\begin{enumerate}[(i)]
\item Let $c_0$ be the vector space of real sequences converging to $0$, and let $d_0
\subseteq c_0$ be the (convex) set of sequences in $c_0$ contained entirely in $I$. 
We will occasionally mention the $\sup$-norm on $c_0$ defined by
\[
\|x\|_{\infty} = \sup_n |x_n|.
\]
\item Let $\scat{C}$ be the category whose objects are all finite powers of $I$ (including
$1 = I^0$) and all affine maps between them.
\item Let $\scat{D}$ be the category whose objects are all finite powers of $I$, together
with $d_0$, and all affine maps between them.
\end{enumerate}
\end{defn}

\begin{prop}
\label{prop:conv-maps}
\begin{enumerate}[(i)]
\item
Every affine $h \from I^n \to I$ is of the form
\[
h(x) = a_0 + \sum_{i=1}^n a_i x_i
\]
for some $a_i \in \reals$.
\item
Every affine $ h \from d_0 \to I$ is of the form
\[
h(x) = a_0 + \sum_{i=1}^{\infty} a_i x_i
\]
for some $a_i \in \reals$ with $\sum_{i=1}^{\infty}|a_i| < \infty$.
\end{enumerate}
\end{prop}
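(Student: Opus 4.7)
The plan is to reduce each statement to standard linear algebra via Lemma~\ref{lem:aff-ext}, with (ii) additionally requiring a functional-analytic input. The first step in both cases is to identify the affine span of the domain with its ambient vector space: $\aff(I^n) = \reals^n$ in (i) and $\aff(d_0) = c_0$ in (ii). The same positive/negative-part trick handles both. Given $v$ in the ambient vector space, let $M = \max(1, \|v\|_\infty)$ and decompose $v = v^+ - v^-$ coordinatewise into positive and negative parts $v^\pm_i = \max(\pm v_i, 0)$. Then $\alpha := v^+/(M+1)$ and $\beta := v^-/M$ both lie in the domain (their coordinates lie in $[0,1]$, and in the sequence case both sequences converge to $0$), and
\[
v = (M+1)\, \alpha + (-M)\, \beta
\]
is a two-element affine combination (the coefficients $M+1$ and $-M$ sum to $1$). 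Lemma~\ref{lem:aff-ext} then yields a unique affine extension $\tilde h$ of $h$ to the ambient vector space, which we may write as $\tilde h(x) = a_0 + \ell(x)$ where $a_0 = h(0)$ and $\ell$ is linear.

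For (i), $\ell \from \reals^n \to \reals$ is automatically of the form $\ell(x) = \sum_{i=1}^n a_i x_i$ with $a_i = \ell(e_i) = h(e_i) - a_0$, completing the argument. For (ii), I would next show $\ell$ is continuous in the $\sup$-norm: since $\tilde h(d_0) \subseteq I$ we have $\ell(d_0) \subseteq [-a_0, 1-a_0]$, and the closed unit ball of $(c_0, \|\cdot\|_\infty)$ coincides with $d_0 - d_0$ (once more via positive and negative parts), so $\ell$ is bounded on this ball and hence continuous. The classical duality $c_0^* \iso \ell^1$ then gives $\ell(x) = \sum_{i=1}^\infty a_i x_i$ with $\sum_i |a_i| < \infty$ and $a_i = \ell(e_i) = h(e_i) - a_0$, yielding the claimed formula.

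The main obstacle is the combination of the affine-span calculation $\aff(d_0) = c_0$ with the continuity argument for $\ell$: each is a piece of functional analysis not previously invoked in the paper, and care is required to check that the positive/negative-part constructions really stay in $d_0$ (that is, remain bounded by $1$ and converge to $0$). Once $\ell$ is known to be continuous, the appeal to $c_0^* \iso \ell^1$ is immediate and delivers the summability $\sum_i |a_i| < \infty$ that distinguishes (ii) from (i).
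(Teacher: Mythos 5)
Your proposal is correct and follows essentially the same route as the paper: extend $h$ affinely to $\reals^n$ (resp.\ $c_0$) via Lemma~\ref{lem:aff-ext}, split off the translation part, and for (ii) deduce continuity of the linear part from boundedness on $d_0 - d_0$ (the unit ball of $c_0$) before invoking $c_0^* \iso \ell^1$. The only difference is that you spell out the positive/negative-part decompositions verifying $\aff(d_0) = c_0$ and $d_0 - d_0 = \{x \in c_0 \such \|x\|_\infty \le 1\}$, which the paper asserts without proof.
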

\begin{proof}
(i) By Lemma~\ref{lem:aff-ext}, there is a unique extension of $h$ to an affine map $\reals^n \to \reals$. But any
affine map between vector spaces can be written as a linear map followed by a translation
of the codomain. The general form of such a map $\reals^n \to \reals$ is as claimed.

(ii) As in (i), $h$ has a unique affine extension $c_0 \to \reals$ (since $\aff(d_0) = c_0$),
and this can be written as a linear map followed by a translation; write $h'$ for the linear part. We claim that
$h'$ is continuous with respect to the $\sup$-norm on $c_0$:

For subsets $A$ and $B$ of a vector space, write
\[
A-B = \{a-b \such a \in A, b \in B\}
\]
Then
\[
h'(d_0 - d_0) = h'(d_0) - h'(d_0) \subseteq I- I = [-1,1],
\]
but $d_0 - d_0$ is the unit ball in $c_0$ with the $\sup$-norm, and $h'$ maps it into a
bounded set, so $h'$ is continuous.

But a continuous linear functional on $c_0$ is of the form
\[
x \mapsto\sum_{i=1}^{\infty} a_i x_i
\]
for some $a_i \in \reals$ with  $\sum_{i=1}^{\infty}|a_i| < \infty$ (this fact is a common
exercise in courses on functional analysis; see for example Exercise~1 in Chapter~3 of 
\cite{bollobas99}). So $h$ is as claimed.
\end{proof}

Every object of $\scat{D}$ can be given a measurable space structure as a
subspace of a product of copies of $I$ (recall that $I$ is always given the Borel
$\sigma$-algebra).

\begin{prop}
\label{prop:conv-meas}
All the maps in $\scat{D}$ are measurable.
\end{prop}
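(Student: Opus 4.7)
The plan is to reduce to the case of affine maps with codomain $I$, and then use the explicit description from Proposition~\ref{prop:conv-maps}.

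First, I would observe that the measurable structure on every object of $\scat{D}$ is, by construction, a subspace of a product of copies of $I$, and the $\sigma$-algebra is generated by the coordinate projections $\pi_i \from I^n \to I$ or $\pi_i \from d_0 \to I$. Consequently, for any affine $h \from c \to c'$ in $\scat{D}$ it suffices to check that each $\pi_i \of h \from c \to I$ is measurable. This reduces the problem to affine maps whose codomain is $I$, which are classified by Proposition~\ref{prop:conv-maps}.

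For an affine $h \from I^n \to I$, Proposition~\ref{prop:conv-maps}(i) expresses $h$ as a polynomial of degree at most one in the coordinates, hence continuous with respect to the usual topology on $I^n$, and therefore Borel measurable. For an affine $h \from d_0 \to I$, Proposition~\ref{prop:conv-maps}(ii) gives $h(x) = a_0 + \sum_{i=1}^\infty a_i x_i$ with $\sum_{i=1}^\infty |a_i| < \infty$. Each partial sum $h_N(x) = a_0 + \sum_{i=1}^N a_i x_i$ depends measurably on the first $N$ coordinates (it is continuous in them), hence is measurable as a function on $d_0$. Since $|x_i| \leq 1$ for all $x \in d_0$, the tail $\bigl|\sum_{i>N} a_i x_i\bigr| \leq \sum_{i>N} |a_i| \to 0$, so $h_N \to h$ uniformly, and in particular pointwise; therefore $h$ is measurable as a pointwise limit of measurable real-valued functions.

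Combining these two facts with the reduction to coordinate components handles every morphism in $\scat{D}$, since the objects of $\scat{D}$ are $I^n$ and $d_0$, and projections onto the coordinates of either are generating measurable maps. The only non-routine step is the argument for $d_0$, where one must invoke the absolute summability $\sum |a_i| < \infty$ supplied by Proposition~\ref{prop:conv-maps}(ii) to upgrade the formal series to a genuine uniform limit of measurable functions; everything else is formal.
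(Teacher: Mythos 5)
Your proof is correct, and the reduction to affine maps with codomain $I$ and the treatment of the $I^n$ case match the paper exactly. Where you genuinely diverge is the $d_0$ case. The paper argues topologically: it shows that the basic open sets of the $\sup$-norm topology on $d_0$ are countable intersections of generating cylinder sets, hence measurable, so that every norm-continuous function $d_0 \to I$ is measurable, and then observes that $h$ is continuous because its linear part is a bounded functional on $c_0$. You instead argue via the partial sums $h_N(x) = a_0 + \sum_{i=1}^N a_i x_i$, each of which factors through a finite coordinate projection and is therefore measurable, and then use the absolute summability $\sum_i |a_i| < \infty$ together with $|x_i| \le 1$ to get uniform (hence pointwise) convergence $h_N \to h$, so that $h$ is measurable as a pointwise limit of measurable functions. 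Your route is more elementary and self-contained, avoiding the topological detour entirely. The paper's route buys something you lose: the intermediate fact that norm-continuous functions on $d_0$ (and on $\Meas(\nat,I)$) are measurable is reused verbatim in the appendix, in the proof that the finitely additive integration operator coming from a non-principal ultrafilter on $\nat$ is measurable. So the topological lemma is not just a stylistic choice but a reusable tool elsewhere in the paper; your argument, while perfectly valid here, would not transfer to that later application without further work.
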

\begin{proof}
A map $c \to c'$ in $\scat{D}$ is measurable if and only if its composite with each projection
$c' \to I$ is measurable, so it is sufficient to show that affine maps $h \from c \to I$ are measurable.

If $c= I^n$, then $h$ is of the form described in Proposition~\ref{prop:conv-maps} (i), and
is measurable, since all the basic arithmetic operations are.

Suppose $c = d_0$ and $h$ is of the form described in Proposition~\ref{prop:conv-maps} (ii).

Now, consider the topology on $d_0$ as a subset of $c_0$ with the $\sup$-norm.
A basic open set for this topology is of the form
\[
U = d_0 \cap \prod_{i=1}^{\infty} (x_i - \epsilon, x_i + \epsilon)
\]
for some $x \in d_0$ and $\epsilon > 0$. The $\sigma$-algebra on $d_0$ is generated by sets of the form $\prod_{i=1}^{\infty} A_i$,
where $A_i = I$ for all but one $i$, say $i_0$, and $A_{i_0}$ is measurable. Clearly a basic open set
can be written as a countable intersection of such sets, so is measurable. On the other hand, $d_0$ is a separable metric
space (a countable dense set is given by the sequences of rationals that are eventually $0$), and therefore
second countable, by a standard exercise in topology (e.g. Exercise~2.23 in~\cite{rudin53}). Moreover, the countable base we obtain is
contained in the original base, and so consists of measurable sets. Hence every open subset is a countable
union of measurable sets, so is measurable, and it follows that a norm-continuous
function $d_0 \to I$ is measurable. But $h$ is the composite of a continuous linear functional
on $c_0$ and a translation of $\reals$, so is continuous, and hence measurable.
\end{proof}

\begin{cor}
There are natural forgetful functors $U \from \scat{C} \to \Meas$ and $V \from \scat{D} \to \Meas$.
\qed
\end{cor}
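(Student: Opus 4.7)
The plan is essentially to assemble pieces that are already in place. On objects, I would define $U$ (respectively $V$) to send each convex set in $\scat{C}$ (respectively $\scat{D}$) to its underlying measurable space, where the $\sigma$-algebra is the one inherited from the product of copies of $I$ as described in the paragraph preceding Proposition~\ref{prop:conv-meas}. On morphisms, I would send each affine map to itself, viewed as a function between the underlying sets of the corresponding measurable spaces.

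The content to verify is then twofold. First, these really are morphisms in $\Meas$, i.e.\ affine maps between objects of $\scat{C}$ or $\scat{D}$ are measurable with respect to the chosen $\sigma$-algebras; but this is precisely the statement of Proposition~\ref{prop:conv-meas}, so nothing more is needed here. Second, functoriality: since $U$ and $V$ act as the identity on underlying sets and on underlying functions, preservation of composition and identities is inherited from $\Set$ and requires no separate argument.

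There is essentially no obstacle, since the substantive work (showing that affine maps into $I$ from $I^n$ and from $d_0$ have explicit linear-plus-constant forms, and that such maps are measurable) was done in Propositions~\ref{prop:conv-maps} and~\ref{prop:conv-meas}. The only mildly delicate point worth flagging is the implicit choice of $\sigma$-algebra: although a convex subset $c$ of $\reals^n$ could a priori be viewed either as a subspace of $I^n$ or of $\reals^n$ (with the Borel $\sigma$-algebra), these coincide, so there is no ambiguity in the functors $U$ and $V$.
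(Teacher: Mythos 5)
Your proposal is correct and matches the paper's (implicit) reasoning: the corollary is stated with no proof precisely because it is immediate from Proposition~\ref{prop:conv-meas}, with functoriality trivial since $U$ and $V$ act as the identity on underlying sets and functions. Nothing further is needed.
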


We can now state the main theorem of this paper:

\begin{thm}
\label{thm:giry-cod}
\begin{enumerate}[(i)]
\item The codensity monad $\mnd{T}^U$ of $U \from \scat{C} \to \Meas$ is isomorphic to the
finitely additive Giry monad.
\item The codensity monad $\mnd{T}^V$ of $V \from \scat{D} \to \Meas$ is isomorphic to the
Giry monad.
\end{enumerate}
\end{thm}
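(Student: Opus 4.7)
The plan is to reduce the theorem to establishing monad isomorphisms $\mnd{T}^U \cong \mnd{S}$ and $\mnd{T}^V \cong \mnd{T}$, and then to compose with the isomorphisms $\Lambda$ and $\Xi$ of Proposition~\ref{prop:giry-int-mnd} to identify these with the Giry monads. Using the generalised-element description of Section~\ref{sec:cod}, an $s$-element $\alpha$ of $T^U m$ is a family of functions $\alpha_c \from \Meas(m, Uc) \to \Meas(s, Uc)$ natural in $c \in \scat{C}$, and similarly for $T^V m$; the task is to exhibit a bijection between such families and $s$-elements of $Sm$ (respectively $Tm$) that is natural in both $s$ and $m$, and then to check compatibility with units and multiplications.

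In one direction, send $\alpha$ to its $c = I$ component $\alpha_I \from \Meas(m, I) \to \Meas(s, I)$. Naturality with the coordinate projections $\pi_i \from I^n \to I$ forces the product formula $\alpha_{I^n}(f_1, \ldots, f_n) = (\alpha_I(f_1), \ldots, \alpha_I(f_n))$, so the whole family is recoverable from $\alpha_I$. Naturality with the unique affine map $1 \to I$ picking a scalar $r \in I$ yields the weakly averaging axiom $\alpha_I(\const{r}) = \const{r}$, and naturality with the convex combination $(x, y) \mapsto rx + (1-r)y \from I^2 \to I$ yields the affine axiom, so $\alpha_I$ gives, pointwise on $s$, a finitely additive integration operator, i.e.\ an $s$-element of $Sm$. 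For the converse, given $\phi \in Sm$ define $\alpha^{\phi}_{I^n}(f_1, \ldots, f_n) = (\phi(f_1), \ldots, \phi(f_n))$; naturality with an arbitrary affine $h \from I^n \to I$ reduces, via Proposition~\ref{prop:conv-maps}(i) (writing $h$ as $a_0 + \sum a_i x_i$) and Corollary~\ref{cor:lin-ext} (extending $\phi$ linearly to bounded functions), to the identity $\phi(a_0 + \sum a_i f_i) = a_0 + \sum a_i \phi(f_i)$. Measurability of the resulting morphism $s \to I^n$ is built into the $\sigma$-algebra on $Sm$.

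For part (ii), extend the bijection across the extra object $d_0$. Naturality with the projections $\pi_n \from d_0 \to I$ forces $\alpha_{d_0}(f)$ to have $n$th coordinate $\alpha_I(\pi_n \of f)$; since $f \from m \to d_0$ means that the sequence $\pi_n \of f$ tends pointwise to $0$ on $m$, the requirement that $\alpha_{d_0}(f)$ land in $d_0$ is precisely the limit-respecting axiom. Conversely, for $\phi \in Tm$ set $\alpha^{\phi}_{d_0}(f)_n = \phi(\pi_n \of f)$, which lies in $d_0$ by limit-respecting. The main obstacle here is naturality with an arbitrary affine $h \from d_0 \to I$, which by Proposition~\ref{prop:conv-maps}(ii) has the form $a_0 + \sum_{i=1}^{\infty} a_i x_i$ with $\sum |a_i| < \infty$: one must exchange $\phi$ with an absolutely convergent infinite sum. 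I would handle this by applying Lemma~\ref{lem:int-lim} to the uniformly bounded partial sums $a_0 + \sum_{i=1}^{n} a_i f_i$, translating and rescaling so that they take values in $I$ in order to meet the hypotheses of the lemma, and then combining with the finite linearity of Corollary~\ref{cor:lin-ext}.

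It remains to match monad structures. Naturality of the bijection in $m$ follows by tracing through the formulas, and measurability in both directions is direct. Using equations~\bref{eqn:gen-elt-unit} and \bref{eqn:gen-elt-mult}, the unit $\eta^U_m$ sends $e \in_s m$ to the family whose $I$-component is $f \mapsto f(e)$, matching $\eta^{\mnd{S}}_m(e)(f) = f(e)$ from Proposition~\ref{prop:giry-int-mnd}; the multiplication $\mu^U_m$ sends $\beta$ to the family whose $I$-component is $f \mapsto \beta_I(\ev_f)$, matching $\mu^{\mnd{S}}_m(\psi)(f) = \psi(\ev_f)$. Composing with $\Lambda$ and $\Xi$ then yields both parts of the theorem.
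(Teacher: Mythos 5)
Your proposal is correct and follows essentially the same route as the paper: identify $T^U\Omega$ with $S\Omega$ via the $I$-component $\alpha_I$, extract the affine and weakly averaging axioms from naturality against $\psub{r}\from I^2\to I$ and $1\to I$, recover the whole family from $\phi$ using Proposition~\ref{prop:conv-maps} and Corollary~\ref{cor:lin-ext}, obtain countable additivity from the requirement that $\alpha_{d_0}$ land in $d_0$, handle the infinite sums $a_0+\sum a_i x_i$ by passing to partial sums and invoking Lemma~\ref{lem:int-lim}, and match the monad structures via Equations~\bref{eqn:gen-elt-unit} and \bref{eqn:gen-elt-mult}. The only cosmetic difference is that you phrase the bijection at a general stage $s$ while the paper works with ordinary elements ($s=1$); nothing of substance changes.
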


The proof will follow shortly, but first let us make some general observations about the measurable
space $T^U \Omega$. We saw in Section~\ref{sec:cod} that an $s$-element of $T^U \Omega$ is a
family $\alpha$ of functions
\[
\alpha_c \from \Meas(\Omega,Uc) \to \Meas(s,Uc),
\]
natural in $c$. In particular, an \emph{ordinary} element of $T^U \Omega$
(which is the same as a generalised element of shape $1$) is a natural family of functions
\[
\alpha_c \from \Meas(\Omega,Uc) \to Uc.
\]
The $\sigma$-algebra on $T^U \Omega$ is the smallest such that
\[
\ev_f \from T^U \Omega \to Uc
\]
is measurable, for each $f \in \Meas(\Omega,Uc)$.

\begin{lem}
\label{lem:nat-proj}
Let $\alpha \in T^U \Omega$. Then
\[
\alpha_{I^n} = (\alpha_I)^n \from \Meas(\Omega,I^n) \iso \Meas(\Omega,I)^n \to I^n.
\]
The same is true if $\alpha \in T^V \Omega$, and then $\alpha_{d_0}$ is also obtained by
applying $\alpha_I$ componentwise. 
\end{lem}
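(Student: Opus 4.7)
The strategy is to exploit naturality of $\alpha$ with respect to the coordinate projections, which are morphisms in $\scat{C}$ (respectively $\scat{D}$) since projections are linear and hence affine, and they land in $I$ in both cases.

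First I would treat the $I^n$ case. For each $i = 1, \ldots, n$, let $\pi_i \from I^n \to I$ be the $i$-th projection. This is an affine map between finite powers of $I$, hence a morphism in $\scat{C}$ (and also in $\scat{D}$). Naturality of $\alpha$ applied to $\pi_i$ gives, for any $f \in \Meas(\Omega, I^n)$,
\[
\alpha_I(\pi_i \of f) = \pi_i \of \alpha_{I^n}(f).
\]
Thus the $i$-th coordinate of $\alpha_{I^n}(f) \in I^n$ is $\alpha_I(\pi_i \of f)$. Under the canonical isomorphism $\Meas(\Omega, I^n) \iso \Meas(\Omega, I)^n$ sending $f$ to $(\pi_1 \of f, \ldots, \pi_n \of f)$, this says precisely that $\alpha_{I^n}$ is $(\alpha_I)^n$, applied componentwise.

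For the $d_0$ case, I would use exactly the same argument with the countably many projections $\pi_i \from d_0 \to I$. Each $\pi_i$ is the restriction of a linear functional on $c_0$, sends $d_0$ into $I$ by definition of $d_0$, and is affine, so it is a morphism in $\scat{D}$. Naturality of $\alpha$ with respect to $\pi_i$ shows that for $f \in \Meas(\Omega, d_0)$, the $i$-th coordinate of $\alpha_{d_0}(f)$ is $\alpha_I(\pi_i \of f)$, which is the assertion.

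There is no real obstacle here beyond checking that the projections are morphisms in $\scat{C}$ and $\scat{D}$, which is immediate, and noting that a map into a product (or into $d_0 \subseteq I^{\mathbb{N}}$ given its subspace structure) is determined by its projections. The content of the lemma is simply that naturality at the projections forces $\alpha$ to act coordinatewise.
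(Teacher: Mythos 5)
Your proposal is correct and follows essentially the same route as the paper: the paper's proof is exactly the commutativity of the naturality square for the projections $\pi_i \from I^n \to I$ (and similarly for $d_0$), yielding $(\alpha_{I^n}(f))_i = \alpha_I(f_i)$. No differences worth noting.
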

\begin{proof}
Let
\[
f = (f_1,\ldots, f_n) \in \Meas(\Omega,I^n)
\]
By commutativity of
\[
\xymatrix{
{\Meas(\Omega,I^n)}\ar[r]^-{\alpha_{I^n}}\ar[d]_{(\pi_i)_*} & I^n\ar[d]^{\pi_i} \\
\Meas(\Omega,I)\ar[r]_-{\alpha_I} & I
}
\]
we have $(\alpha_{I^n}(f))_i = \alpha_I(f_i)$, as required. The proof for $d_0$ is
similar.
\end{proof}

\begin{proof}[Proof of Theorem~\ref{thm:giry-cod}.]
(i) We will establish a bijection between $T^U \Omega$ and $S \Omega$, where $S$ is as in Definition 
\ref{defn:int-func}. Given $\alpha \in T^U \Omega$ we claim that
\[
\alpha_I \from \Meas(\Omega,I) \to I,
\]
is affine and weakly averaging, i.e.\ an element of $S\Omega$. Suppose $f,g \in \Meas(\Omega,I)$,
and $r \in I$. The map 
\[
\psub{r} \from I^2 \to I
\]
is affine, so
\[
\xymatrix{
{\Meas(\Omega,I^2)}\ar[r]^-{\alpha^2}\ar[d]_{(\psub{r})_*} & {I^2}\ar[d]^{\psub{r}} \\
{\Meas(\Omega,I)}\ar[r]_-{\alpha_I} & I
}
\]
commutes, and following $(f,g)$ around this diagram yields
\[
\alpha_I(f) \psub{r} \alpha_I(g) = \alpha_I(f \psub{r} g),
\]
so $\alpha_I$ is affine. The fact that for $r \in I$, we have $\alpha_I(\const{r}) = r$
follows from commutativity of
\[
\xymatrix{
{\Meas(\Omega,1)}\ar[r]\ar[d]_{r_*} & 1\ar[d]^{r} \\
{\Meas(\Omega,I)}\ar[r]_-{\alpha_I} & I.
}
\]
So $\alpha_I$ is a finitely additive integration operator. Now suppose $\phi \in S\Omega$ is a
finitely additive integration operator. Define
\[
\alpha_{I^n} \from \Meas(\Omega,I^n) \to I^n
\]
by
\[
\alpha_{I^n}(f_1,\ldots,f_n) = (\phi(f_1),\ldots, \phi(f_n)).
\]
We must check that this is natural with respect to all maps in $\scat{C}$. Since any
function into $I^n$ is determined by its composites with the projections, it is sufficient
to check naturality with respect to maps with codomain $I$. Suppose $h \from I^n \to I$
is of the form
\[
h(x) = a_0 + \sum_{i=1}^n a_i x_i
\]
from Proposition~\ref{prop:conv-maps}. Then if $f \in \Meas(\Omega,I^n)$,
\begin{align*}
h\of \alpha_{I^n} (f) &= h (\phi(f_1),\ldots,\phi(f_n)) \\
& =a_0 + \sum_{i=1}^n a_i \phi(f_i) \\
&= \phi\left(a_0 + \sum_{i=1}^n a_i f_i \right) \\
&= \phi (h \of f) \\
&= \alpha_I \of h_* (f)
\end{align*}
as required (where the third equality comes from implicitly identifying $\phi$ with
its linear extension from Corollary~\ref{cor:lin-ext}, and the weakly averaging property). It is immediate
that these assignments
\[
T^U \Omega \to S\Omega \quad \text{ and } \quad S\Omega \to T^U \Omega
\]
are inverse to each other, and measurable.

To see that these bijections are natural and respect the monad structures on $T^U$ and
$S$, we must establish the commutativity of certain diagrams. Recall that the functor
and monad structures of $S$ are defined in Definition \ref{defn:int-func} and Proposition~\ref{prop:giry-int-mnd}
respectively. A description of the relevant structure on $T^U$ is given by Equations
\bref{eqn:gen-elt-func}, \bref{eqn:gen-elt-unit} and \bref{eqn:gen-elt-mult} of Section~\ref{sec:cod}
 with $s=1$, so that these become statements about ordinary, rather than
generalised elements. From these facts, and recalling that the (unnamed) bijection
$T^U \Omega \iso S\Omega$ is given by sending $\alpha \in T^U \Omega$ to $\alpha_I$, it is
straightforward to check that the relevant diagrams commute.

(ii) Let $\alpha \in T^V \Omega$. As before, $\alpha_I$ is affine and weakly averaging; now
we show it respects limits. Suppose $f_n \from \Omega \to I$ is a sequence of measurable
functions converging pointwise to $0$. Then $f$ defines an element of $\Meas(\Omega,d_0)$.
By Lemma~\ref{lem:nat-proj}, 
\[
(\alpha_{d_0}(f))_i = \alpha_I(f_i),
\]
so since $\alpha_{d_0}(f) \in d_0$, we must have $\alpha_I(f_i) \to 0$.

Now suppose $\phi$ is an integration operator. Let $\alpha_{I^n}$ be defined as in
(i), and define $\alpha_{d_0}(f)_i = \phi(f_i)$ for $f \in \Meas(\Omega,d_0)$. The fact that $\phi$ preserves limits of
sequences converging to $0$ means that $\alpha_{d_0}$ does map into $d_0$.
Once again we only need to check that $\alpha$ is natural with respect to maps with
codomain $I$, and for maps out of $I^n$ this is as before.

Suppose $h \from d_0 \to I$ is affine, say
\[
h(x) = a_0 + \sum_{i=1}^{\infty}a_i x_i.
\]
Then
\begin{align*}
h \of \alpha_{d_0} (f) &= h\left( (\phi(f_i))_{i=1}^{\infty}\right) \\
&= a_0 + \sum_{i=1}^{\infty} a_i\phi(f_i) \\
&= \lim_{N \to \infty}\left( a_0 + \sum_{i=1}^N a_i\phi(f_i)\right) \\
&= \lim_{N \to \infty} \phi\left(a_0 + \sum_{i=1}^N a_i f_i \right) \\
&= \phi\left( a_0 + \sum_{i=1}^{\infty} a_i f_i \right) \\
&= \phi(h\of f) \\
&= \alpha_I \of h_* (f),
\end{align*}
as required. Here we have again implicitly used the linear extension of $\phi$ from
Corollary~\ref{cor:lin-ext}, and also the result that $\phi$ preserves all limits (Lemma~\ref{lem:int-lim}).
As in (i), the remainder of the proof is a series of straightforward checks.
\end{proof}
Note that in the preceding proof we only made use of the objects $1$, $I$,
$I^2$, and in part (ii), $d_0$. Thus we could have taken $\scat{C}$ and $\scat{D}$ to
be the categories with just these objects and affine maps between them. In fact, even more is true:

\begin{prop}
\label{prop:giry-cod-mon}
Let $M$ and $N$ be the monoids of affine endomorphisms of $I^2$ and $d_0$
respectively. Then
\begin{enumerate}[(i)]
\item The codensity monad $\mnd{T}^M$ of the action of $M$ on $I^2$ in $\Meas$ is the finitely
additive Giry monad.
\item The codensity monad $\mnd{T}^N$ of the action of $N$ on $d_0$ in $\Meas$ is the Giry
monad.
\end{enumerate}
\end{prop}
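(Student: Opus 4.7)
The plan is to show the canonical restriction maps $T^U\Omega \to T^M\Omega$ and $T^V\Omega \to T^N\Omega$ (induced by the one-object inclusions $\{I^2\}\hookrightarrow\scat{C}$ and $\{d_0\}\hookrightarrow\scat{D}$) are isomorphisms of monads, and then invoke Theorem~\ref{thm:giry-cod}. The proof of Theorem~\ref{thm:giry-cod} already shows that any $\alpha\in T^U\Omega$ is determined by $\alpha_I$, hence certainly by $\alpha_{I^2}$, so injectivity of each restriction is immediate. For surjectivity, I would construct from an $M$-equivariant $\alpha\from\Meas(\Omega,I^2)\to I^2$ a finitely additive integration operator $\phi$, and then invoke the extension procedure from the proof of Theorem~\ref{thm:giry-cod}; similarly for $N$.

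For (i), the key point is that the endomorphisms $(x,y)\mapsto(x,x)$ and $(x,y)\mapsto(y,y)$ of $I^2$ show, respectively, that $\alpha(f_1,f_2)_1$ depends only on $f_1$ and $\alpha(f_1,f_2)_2$ only on $f_2$; so $\alpha$ is of the form $(f_1,f_2)\mapsto(\phi(f_1),\phi(f_2))$ for some $\phi\from\Meas(\Omega,I)\to I$. Equivariance under the constant endomorphism $(x,y)\mapsto(r,r)$ gives $\phi(\const r)=r$, and equivariance under $(x,y)\mapsto(rx+(1-r)y,rx+(1-r)y)$ gives the affine property, so $\phi\in S\Omega$. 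Conversely, defining $\alpha(f_1,f_2):=(\phi(f_1),\phi(f_2))$ from $\phi\in S\Omega$ yields an $M$-equivariant map, because Proposition~\ref{prop:conv-maps}(i) displays every affine endomorphism of $I^2$ as a pair of expressions $a_0+a_1x_1+a_2x_2$, and equivariance then follows from the linear extension of $\phi$ (Corollary~\ref{cor:lin-ext}) together with the weakly averaging property.

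For (ii), the same strategy applies. The truncation endomorphisms $(x_i)_i\mapsto(x_1,\ldots,x_n,0,0,\ldots)$ of $d_0$, combined with the finite-dimensional argument of (i) applied coordinatewise, yield $\alpha(f)_i=\phi(f_i)$ for a single $\phi\from\Meas(\Omega,I)\to I$; limit-preservation of $\phi$ then comes for free, since if $f_n\to 0$ pointwise in $\Meas(\Omega,I)$ then $(f_n)_n\in\Meas(\Omega,d_0)$ and $(\phi(f_n))_n=\alpha((f_n)_n)\in d_0$, forcing $\phi(f_n)\to 0$. For the converse, Lemma~\ref{lem:int-lim} ensures that $\alpha(f)_i:=\phi(f_i)$ really lands in $d_0$, and $N$-equivariance then uses Proposition~\ref{prop:conv-maps}(ii) together with linearity and sequential continuity of the extension of $\phi$.

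The main obstacle I anticipate is part (ii): one must identify enough affine endomorphisms of $d_0$ (partial projections, diagonals, and finite affine combinations concentrated on the first few coordinates) to reduce to (i) coordinatewise, and then show that $N$-equivariance together with the explicit form in Proposition~\ref{prop:conv-maps}(ii) suffices to recover equivariance with respect to all morphisms of $\scat{D}$. Once these reductions are in hand, measurability of the bijections, naturality in $\Omega$, and compatibility with the monad structures follow from the same computations as in the proof of Theorem~\ref{thm:giry-cod}, which were already carried out in terms of $\alpha_{I^2}$ and $\alpha_{d_0}$ by way of Lemma~\ref{lem:nat-proj}.
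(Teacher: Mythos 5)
Your proposal is correct and takes essentially the same approach as the paper: the paper likewise reduces to Theorem~\ref{thm:giry-cod} by extracting a (finitely additive) integration operator $\phi$ from an equivariant map $\alpha$ using a handful of explicit affine endomorphisms (partial projections $\pi'_i$, the combination map $\psubp{r}$ and constant maps $r'$, which play the roles of your diagonals, convex-combination maps and constants), and gets the limit-preservation of $\phi$ for free from $\alpha(f)\in d_0$ exactly as you do. The only cosmetic difference is that the paper works out part (ii) in detail and declares (i) similar, whereas you do the reverse.
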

Recall that an action of a monoid on an object of a category is essentially the same as a functor
from the monoid (regarded as a category with one object) to the category, so it makes sense to
talk about the codensity monad of an action.
\begin{proof}[Proof of Proposition~\ref{prop:giry-cod-mon}.]
We will prove (ii); (i) is similar. It is clear from Theorem~\ref{thm:giry-cod} that an
integration operator on $\Omega$ will define an element of $T^N \Omega$. Given $\alpha \in T^N \Omega$,
which we regard as a function $\Meas(\Omega,d_0) \to d_0$ that commutes with affine endomorphisms
of $d_0$, we must construct an integration operator. Define
\begin{align*}
\iota_0 \from 1 \to d_0, \quad & \text{an arbitrary map,} \\
\iota_1 \from I \to d_0, \quad & x \mapsto (x,0,0,\ldots), \\
\iota_2 \from I^2 \to d_0, \quad & (x_1,x_2) \mapsto (x_1,x_2, 0, \ldots), \\
\pi'_i \from d_0 \to d_0, \quad & (x_1,x_2, \ldots) \mapsto (x_i,0,\ldots), \\
\psubp{r} \from d_0 \to d_0, \quad & (x_1, x_2, \ldots) \mapsto (x_1 \psub{r} x_2, 0 \ldots), \\
r' \from d_0 \to d_0, \quad & (x_1,x_2,\ldots) \mapsto (r, 0, \ldots)
\end{align*}
(where $r \in I$), and let $\phi$ be the composite
\[
\xymatrix{
{\Meas(\Omega,I)}\ar[r]^-{(\iota_1)_*} & {\Meas(\Omega,d_0)}\ar[r]^-{\alpha} & {d_0}\ar[r]^{\pi_1} & {I.}
}
\]
Then $\alpha$ is obtained by applying $\phi$ componentwise, by the commutativity of
\[
\xymatrix{
{\Meas(\Omega,d_0)}\ar@{=}[r]\ar[d]_{(\pi_i)_*} & {\Meas(\Omega,d_0)}\ar[r]^-{\alpha}\ar[d]_{(\pi'_i)_*} & {d_0}\ar@{=}[r]\ar[d]_{\pi'_i} & {d_0}\ar[d]^{\pi_i} \\
{\Meas(\Omega,I)}\ar[r]_-{(\iota_1)_*} & {\Meas(\Omega,d_0)}\ar[r]_-{\alpha} & {d_0}\ar[r]_{\pi_1} & I,
}
\]
for each $i$. In particular, since $\alpha (f) \in d_0$, it follows that $\phi$ respects limits.
The affine and weakly averaging properties of $\phi$ follow from the commutativity of
\[
\xymatrix{
{\Meas(\Omega,I^2)}\ar[r]_{(\iota_2)_*}\ar[d]_{(\psub{r})_*}\ar@/^10pt/@<3pt>[rrr]^{\phi^2} & {\Meas(\Omega,d_0)}\ar[r]_-{\alpha}\ar[d]_{(\psubp{r})_*} & {d_0}\ar[r]_{(\pi_1,\pi_2)}\ar[d]_{\psubp{r}} & {I^2}\ar[d]^{\psub{r}} \\
{\Meas(\Omega,I)}\ar[r]^{(\iota_1)_*}\ar@/_10pt/@<-3pt>[rrr]_{\phi} & {\Meas(\Omega,d_0)}\ar[r]^-{\alpha} & {d_0}\ar[r]^{\pi_1} & I
}
\]
and
\[
\xymatrix{
{\Meas(\Omega,1)}\ar[r]_{(\iota_0)_*}\ar[d]_{r_*}\ar@/^10pt/@<3pt>[rrr]^{\phi^0} & {\Meas(\Omega,d_0)}\ar[r]_-{\alpha}\ar[d]_{r'_*} & {d_0}\ar[r]\ar[d]_{r'} & 1\ar[d]^r \\
{\Meas(\Omega,I)}\ar[r]^{(\iota_1)_*}\ar@/_10pt/@<-3pt>[rrr]_{\phi} & {\Meas(\Omega,d_0)}\ar[r]^-{\alpha} & {d_0}\ar[r]^{\pi_1} & I
}
\]
respectively.
\end{proof}
The preceding proposition gives categories of convex sets that are in some
sense minimal (although not uniquely so) such that the 
codensity monads of their inclusions into $\Meas$ are the Giry monads. It is
natural to ask how large a category of convex sets (or even convex spaces
in the sense of \cite{fritz09}) can be and still give rise
to the Giry monad. We have not answered this question precisely, but the
following proposition at least gives a class of convex sets that can be included
in the domain category without altering the codensity monad.

\begin{prop}
\label{prop:conv-bound}
Let $\scat{C}'$ be the category of compact, convex subsets of $\reals^n$
(where $n$ can vary) with affine maps between them and let $\scat{D}'$ be
 similar but with $d_0$ adjoined. Then the codensity monads of
the forgetful functors $U'\from \scat{C}' \to \Meas$ and $V' \from \scat{D}'
\to \Meas$ are the finitely additive Giry monad and the Giry monad respectively.
\end{prop}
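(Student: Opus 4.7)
The strategy is to show that restriction along the inclusions $\scat{C} \hookrightarrow \scat{C}'$ and $\scat{D} \hookrightarrow \scat{D}'$ induces isomorphisms of codensity monads $T^{U'} \cong T^U$ and $T^{V'} \cong T^V$. Since $U'|_{\scat{C}} = U$ and $V'|_{\scat{D}} = V$, the universal property of Kan extensions supplies canonical monad morphisms in each case, and combined with Theorem~\ref{thm:giry-cod} it suffices to prove these are componentwise bijections at each $\Omega \in \Meas$.

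For injectivity, every $c \in \scat{C}'$ with $c \subseteq \reals^n$ embeds affinely into a cube $[-M,M]^n$ via the inclusion $\iota$, and $[-M,M]^n$ is affinely isomorphic to $I^n \in \scat{C}$ via rescaling. Naturality of any $\alpha \in T^{U'}\Omega$ with respect to these affine maps, together with Lemma~\ref{lem:nat-proj}, forces $\iota \circ \alpha_c(f)$ to be computable from $\alpha_I$ alone, so that $\alpha$ is entirely determined by its restriction to $\scat{C}$. The same argument works inside $\scat{D}'$.

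For surjectivity, I would construct an explicit splitting. Given a finitely additive integration operator $\phi \in S\Omega$ (respectively an integration operator $\phi \in T\Omega$), extend $\phi$ linearly to $\Meas_b(\Omega,\reals)$ using Corollary~\ref{cor:lin-ext}, and for $c \subseteq \reals^n$ bounded convex and $f = (f_1,\ldots,f_n) \in \Meas(\Omega,c)$ set
\[
\alpha_c(f) = (\phi(f_1),\ldots,\phi(f_n)) \in \reals^n.
\]
Naturality with respect to an affine $g \from c \to c'$ in $\scat{C}'$ follows by writing the unique affine extension of $g$ between affine spans (Lemma~\ref{lem:aff-ext}) as a linear part plus translation and invoking linearity of $\phi$; the restriction to $\scat{C}$ then reproduces the family from the proof of Theorem~\ref{thm:giry-cod}. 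In part (ii), the only extra object is $d_0$, which already belongs to $\scat{D}$, so its component is built exactly as in Theorem~\ref{thm:giry-cod}(ii), using that $\phi$ respects pointwise limits.

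The main obstacle is verifying that $\alpha_c(f)$ actually lies in $c$, not merely in $\reals^n$. For closed bounded convex $c$, this follows from Hahn--Banach separation: $c$ is the intersection of the closed affine half-spaces $\{x \in \reals^n : h(x) \leq 0\}$ indexed by affine functionals $h$ that are nonpositive on $c$. For each such $h$, the composite $h \circ f$ is a bounded measurable nonpositive real-valued function on $\Omega$, so order-preservation of $\phi$ (Lemma~\ref{lem:int-prop}(iii), extended linearly to $\Meas_b(\Omega,\reals)$) gives $\phi(h \circ f) \leq 0$; linearity of $\phi$ then rewrites this as $h(\alpha_c(f)) = \phi(h \circ f) \leq 0$. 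Thus $\alpha_c(f)$ lies in every half-space containing $c$, hence in $c$ itself. Compatibility with the monad structures is then automatic, since restriction along a full subcategory inclusion is a monad morphism between codensity monads.
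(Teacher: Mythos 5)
Your overall strategy is the same as the paper's: restrict along the full inclusions, note that the rescaled coordinate projections $c \to I$ force $\alpha_c$ to be determined by $\alpha_I$, and then extend a given integration operator $\phi$ to all of $\scat{C}'$ by applying its linear extension (Corollary~\ref{cor:lin-ext}) coordinatewise; the naturality check via Lemma~\ref{lem:aff-ext} and the compatibility with the monad structures are handled as in the paper. The gap is in the one step you yourself identify as the main obstacle, namely $\alpha_c(f) \in c$. Your Hahn--Banach argument shows only that $\alpha_c(f)$ lies in every closed half-space containing $c$, i.e.\ that $\alpha_c(f) \in \overline{c}$; this characterises membership in $c$ only when $c$ is closed, and you explicitly restrict to ``closed bounded convex $c$''. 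But the objects of $\scat{C}'$ are \emph{arbitrary} bounded convex subsets of $\reals^n$, so the possibility $\alpha_c(f) \in \overline{c}\setminus c$ is exactly what has to be ruled out. The paper's proof spends most of its effort on this point: it argues by induction on $n$, using a supporting hyperplane $h$ with $h \geq 0$ on $c$ and $h(\alpha_c(f)) = 0$ to deduce $\phi(h \of f) = 0$ and then passing to the lower-dimensional face $\{h = 0\} \cap c$.

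The boundary case is genuinely reached, so this cannot be dismissed as a technicality. Take $\Omega = \nat$ discrete, let $\phi$ be the ultrafilter-limit operator attached to a non-principal ultrafilter (a finitely additive integration operator that is not countably additive), and let $f(n) = 1/(n+2)$, a measurable map into $c = (0,1) \in \scat{C}'$. Then $\phi(f) = 0 \in \overline{c}\setminus c$, while naturality with respect to the affine inclusion $(0,1) \hookrightarrow I$ forces $\alpha_{(0,1)}(f) = \phi(f)$. So no soft perturbation of your separation argument will close the gap in the finitely additive case; what saves the countably additive case is that $\phi(h \of f) = 0$ with $h \of f \geq 0$ then forces $h \of f$ to vanish almost everywhere, which is the mechanism behind the paper's descent to a face (and you should note that this example puts pressure on the non-closed, finitely additive half of the statement itself: here the face $\{x = 0\} \cap (0,1)$ one would descend to is empty). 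As written, your proof establishes the result for the subcategory of \emph{compact} convex subsets of $\reals^n$; for the proposition as stated, the membership step for non-closed $c$ is the entire difficulty and is missing.
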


\begin{proof}
An element of $T^{U'}\Omega$ is a family of functions $\Meas(\Omega,U'c') \to U'c'$ natural in
$c' \in \scat{C}'$. Since $\scat{C} \subseteq \scat{C}'$, and $U$ is the restriction of $U'$, such
a family restricts to a family $\Meas(\Omega,Uc) \to Uc$ natural in $c \in \scat{C}$, that is,
 an element of $T^{U} \Omega\cong S\Omega$. Therefore we just
have to check that every element of $T^{U} \Omega$ has a unique extension to an element
of $T^{U'}\Omega$. Similarly for $V$ and $V'$.

Suppose $\phi$ is a finitely additive integration operator on $\Omega$ and $c$ a compact
convex subset of $\reals^n$. Write $\phi$ also for the unique linear extension of
 $\phi$ to
\[
\Meas_b(\Omega,\reals) \to \reals,
\]
which exists by Corollary~\ref{cor:lin-ext}.
We can define
\[
\alpha_c \from \Meas(\Omega,c) \to \reals^n
\]
by applying $\phi$ in each coordinate. We will now show that
\begin{enumerate}[(i)]
\item If $h \from \reals^n \to \reals^m$ is affine then
\[
\xymatrix{
{\Meas(\Omega,c)}\ar[r]^-{\alpha_c}\ar[d]_{h_*} & {\reals^n}\ar[d]^h \\
{\Meas(\Omega,h(c))}\ar[r]_-{\alpha_{h(c)}} & {\reals^m}
}
\]
commutes, and
\item If $f \from \Omega \to c$ is measurable then $\alpha_c(f) \in c$ (this is presumably
known but we were unable to find a reference).
\end{enumerate}
(i) Since $\reals^m$ is a power of $\reals$ it is sufficient to consider $h\from \reals^n
\to \reals$. Such an $h$ is of the form
\[
(x_1,\ldots,x_n) \mapsto a_0 + a_1 x_1 + \ldots a_n x_n,
\]
and the fact that $\alpha$ commutes with such maps follows from linearity and the
weakly averaging property of $\phi$.

(ii) Let $f \in \Meas(\Omega,c)$, and suppose
for a contradiction that $\alpha_c(f) \notin c$. By applying an affine change of
coordinates, which we may do without loss of generality using (i), we may assume that
$\alpha_c(f) = 0 \notin c$. Then by
the separating hyperplane theorem (see for example Corollary~2.4 in Chapter~3 of \cite{mangasarian94})
 there is a linear functional $h \from \reals^n \to \reals$ and $\epsilon > 0$ such that 
$h(x) > \epsilon$ for all $x \in c$. By (i), we have
\[
\phi (h \of f) = h (\alpha_c (f)) = 0
\]
But $h \of f > \const{\epsilon}$, and so, since $\phi$ is order-preserving and weakly averaging,
$\phi (h \of f) \geq \epsilon$. This is a contradiction, completing the proof of (ii).


From (ii), we have maps $\alpha_c \from \Meas(\Omega,c) \to c$, all that remains is to check that they
commute with all affine maps $c \to c'$. As usual, since $c'$ is a subset of a power of $\reals$,
it is sufficient to check commutativity of all diagrams
\[
\xymatrix{
{\Meas(\Omega,c)}\ar[r]^-{\alpha_c}\ar[d]_{h_*} & {c}\ar[d]^h \\
{\Meas_b(\Omega,\reals)}\ar[r]_-{\phi} & {\reals}
}
\]
for affine $h \from c \to \reals$, where $c \subseteq \reals^n$ is compact and convex. By Lemma~\ref{lem:aff-ext}, $h$ has an affine
extension $\aff(c) \to \reals$ which we shall also write as $h$; we will extend this to an affine map $\reals^n \to \reals$
as follows. Choose $x_0 \in \aff(c)$, and write $L = \{x \in \reals^n \such x+ x_0 \in \aff(c)\}$.
Then $L $ is a \emph{linear} subspace of $\reals^n$, and the map $l \from L \to \reals$
defined by
\[
l(x) = h(x+x_0)-h(x_0)
\]
is linear, so $l$ has a linear extension $l' \from \reals^n \to \reals$. Let
\[
h'(x) = l'(x-x_0) + h(x_0)
\]
for $x \in \reals^n$; then $h'$ is the desired affine extension of $h$. The result
follows by the same argument as in (i) above, with $h'$ in place of $h$.
\end{proof}

\small
\bibliography{bibliography}

\end{document}